\setlist[itemize]{leftmargin=*} 
\setlist[enumerate]{leftmargin=*}
\theoremstyle{plain}
\newtheorem{theorem}{Theorem}[section]
\newtheorem{claim}[theorem]{Claim}
\newtheorem{lemma}[theorem]{Lemma}
\newtheorem{problem}[theorem]{Problem}
\theoremstyle{definition}
\newtheorem{defn}[theorem]{Definition}
\newtheorem*{defn*}{Definition}
\def\expandafter\normalsize\expandafter{%
    \normalsize
    \setlength\abovedisplayskip{4pt}
    \setlength\belowdisplayskip{4pt}
    \setlength\abovedisplayshortskip{4pt}
    \setlength\belowdisplayshortskip{4pt}
}
\newcommand{\cupdot}{\mathbin{\mathaccent\cdot\cup}}
\newcommand{\Bin}{\mathrm{Bin}}
\newcommand{\cA}{\mathcal{A}}
\newcommand{\bit}{\mathrm{bit}}
\newcommand{\tw}{\mathrm{tw}}
\newcommand{\ex}{\mathrm{ex}}
\renewcommand{\Pr}{\mathbb{P}}
\newcommand{\floor}[1]{
    \left \lfloor #1 \right \rfloor
}
\DeclareMathOperator*{\argmax}{arg\,max}
\title{Ramsey numbers of hypergraphs with a given size}
\author{Domagoj Brada\v{c}\thanks{Department of Mathematics, ETH, Z\"urich, Switzerland. Research supported in part by SNSF grant 200021\_196965. Email: \textbf{\{domagoj.bradac, benjamin.sudakov\}@math.ethz.ch}.}
\and Jacob Fox\thanks{Department of Mathematics, Stanford University, Stanford, CA. Email: \textbf{jacobfox@stanford.edu}. Research
supported by NSF Awards DMS-1953990 and DMS-2154129.}
\and Benny Sudakov\footnotemark[1]}
\date{}
\begin{document}

\maketitle

\begin{abstract}
The $q$-color Ramsey number of a $k$-uniform hypergraph $H$ is the minimum integer $N$ such that any $q$-coloring of the complete $k$-uniform hypergraph on $N$ vertices contains a monochromatic copy of $H$. 
The study of these numbers is one of the central topics in Combinatorics. In 1973, Erd\H{o}s and Graham asked to maximize the Ramsey number of a graph as a function of the number of its edges.
Motivated by this problem, we study the analogous question for hypergaphs. For fixed $k \ge 3$ and $q \ge 2$ we prove that the largest possible $q$-color Ramsey number of a $k$-uniform hypergraph with $m$ edges is at most $\tw_k(O(\sqrt{m})),$ where $\tw$ denotes the tower function. We also present a construction showing that this bound is tight for $q \ge 4$. This resolves a problem by Conlon, Fox and Sudakov. They previously proved the upper bound for $k \geq 4$ and the lower bound for $k=3$.  Although in the graph case the tightness follows simply by considering a clique of appropriate size, for higher uniformities the construction is rather involved and is obtained by using paths in expander graphs. 
\end{abstract}

\section{Introduction}
    For a $k$-uniform hypergraph $H$ and a positive integer $q,$ we denote by $r_k(H; q)$ the $q$-color Ramsey number of $H$ defined as the minimum integer $N$ such that any $q$-coloring of the complete $k$-uniform hypergraph on $N$ vertices, denoted by $K_N^{(k)}$, contains a monochromatic copy of $H$. When $H = K^{(k)}_n,$ we simply write $r_k(n; q)$. The existence of these numbers was famously shown by Ramsey \cite{ramsey} in 1930. Since then, finding good bounds on $r_k(H;q)$ for various (hyper)graphs $H$ has been one of the most major areas of study in Discrete mathematics. The first important results in this direction were exponential bounds on the so-called diagonal graph Ramsey number, namely that $\sqrt{2}^n < r_2(n; 2) < 4^n,$ where the upper bound was proven by Erd\H{o}s and Szekeres \cite{erdos-szekeres} and the lower bound by Erd\H{o}s \cite{erdos} as one of the first applications of the probabilistic method. Both of these arguments easily extend to give similar bounds for any fixed number of colors $q$. 
    Despite a great amount of interest and the fact that these bounds are at least 70 years old, until very recently they have been only improved by lower order terms. In March 2023, a major breakthrough was obtained by
    Campos, Griffiths, Morris and Sahasrabudhe \cite{CGMS}, who improved the upper bound to $(4-\epsilon)^n$.

    In the case of hypergraphs, Erd\H{o}s and Rado \cite{erdos-rado} showed that for some constant $c = c(k, q),$  the Ramsey numbers satisfy $r_k(n; q) \le \tw_k(cn),$ where $\tw_k(x)$ denotes the tower function defined as $\tw_1(x) = x$ and $\tw_k(x) = 2^{\tw_{k-1}(x)}$ for $k\ge 2.$ On the other hand, an ingenious construction of Erd\H{o}s and Hajnal (see e.g. \cite{graham1991ramsey}), known as the stepping-up lemma, allows one to obtain a lower bound for hypergraphs of uniformity $k+1$ from lower bounds for uniformity $k,$ essentially gaining an extra exponential. However, this construction only works if the number of colors, $q,$ is at least $4$ or the uniformity, $k$, is at least $3.$ In particular, we have the bounds $r_k(n; 2) \ge \tw_{k-1}(c n^2)$ and $r_k(n; 4) \ge \tw_k(cn).$ The first bound comes from applying a random construction for uniformity $3$ and then applying the stepping-up lemma. Erd\H{o}s, Hajnal and Rado \cite{erdos-hajnal-rado} conjectured that $r_3(n;2) > 2^{2^{cn}}$, which would, by the stepping-up lemma, imply $r_k(n;2) \ge \tw_k(c_kn),$ thus determining the correct tower height of these numbers. However, this remains a major open problem.

    Given the difficulty of finding good bounds for complete graphs and hypergraphs, Burr and Erd\H{o}s \cite{burr-erdos} initiated the study of Ramsey numbers of sparse graphs and, in particular, conjectured that for any integer $\Delta,$ there is $c(\Delta)$ such that $r_2(G; 2) \le c(\Delta) n$ for any $n$-vertex graph with maximum degree at most $\Delta.$ This conjecture was proven by Chv\'{a}tal, R\"{o}dl, Szemer\'edi and Trotter \cite{chvatal-rodl-sz-tr} using Szemer\'edi's celebrated regularity lemma. The development of the hypergraph regularity lemma lead to the generalization of this result to bounded degree hypergraphs proven by Cooley, Fountoulakis, K\"{u}hn and Osthus \cite{cooley2009embeddings}. Burr and Erd\H{o}s also made the stronger conjecture that $r_2(G; 2) < c(d) n$ should also hold for all $d$-degenerate graphs on $n$ vertices and it was proved by Lee \cite{lee2017ramsey}. 

    In 1973, Erd\H{o}s and Graham \cite{erdos-graham} posed a natural question of maximizing the Ramsey number of a graph as a function of the number of its edges.
    Since Ramsey numbers of sparse graphs grow slowly, it is natural to guess that in order to maximize the Ramsey number of a graph with $m$ edges, one should make it as dense as possible. This has motivated Erd\H{o}s and Graham to conjecture that among all graphs with $m = \binom{n}{2}$ edges, the complete graph $K_n$ has the largest Ramsey number. This conjecture appears extremely difficult and there has been no real progress on it. Therefore Erd\H{o}s \cite{erdos-edges-conj} made a weaker conjecture that there is a constant $c$ such that $r_2(G; 2) \le 2^{c \sqrt{m}}$ for any graph $G$ with $m$ edges and no isolated vertices, which would be sharp by the above-mentioned lower bound for the Ramsey number of the complete graph. This conjecture has been resolved by Sudakov \cite{sudakov2011conjecture}. In contrast to many results mentioned above, the argument in \cite{sudakov2011conjecture} only works for two colors and it would be interesting to extend this result to more colors.

In this paper we consider the Erd\H{o}s and Graham question for hypergaphs. Naively one might expect that for fixed $k$ and $q,$ there exists $c = c(k, q)$ such that any $k$-uniform hypergraph $H$ with $m$ edges satisfies $r_k(H; q) \le \tw_k(c m^{1/k})$, i.e., the complete hypergraph is a maximizer. This however, was shown to be false by Conlon, Fox and Sudakov \cite{conlon2009ramsey} who constructed a $3$-uniform hypergraph with $m$ edges whose $4$-color Ramsey number at least $2^{2^{c \sqrt{m}}}$ for some positive absolute constant $c$. On the other hand, they showed that any $k$-uniform hypergraph $H$ with $m$ edges satisfies $r_k(H; q) \le \tw_k(c \sqrt{m}),$ for $k \ge 4,$ while $r_k(H; q) \le \tw_k(c \sqrt{m} \log m)$ for $k=3,$ where the constant $c$ depends only on $k$ and $q$. In a survey on graph Ramsey theory \cite{conlon2015recent}, they further asked whether it is possible to remove the logarithmic factor for the $3$-uniform case.

    \begin{problem} \label{prob:upper-bound-3}
        Show that for any $q \ge 2,$ there exists $c_q$ such that $r_3(H; q) \le 2^{2^{c_q \sqrt{m}}}$ for any $3$-uniform hypergraph $H$ with $m$ edges and no isolated vertices.
    \end{problem}

    In the present paper, we resolve Problem~\ref{prob:upper-bound-3}. Moreover, our proof extends to larger uniformities as well so we present a unified proof for all $k \ge 3$.

    \begin{theorem} \label{thm:upper-bound}
        For any $k \ge 3,$ and any fixed number of colors $q,$ there is a constant $C_{k, q}$ such that the $q$-color Ramsey number of any $k$-uniform hypergraph $H$ with $m$ edges and no isolated vertices is at most $\tw_k(C_{k,q} \sqrt{m}).$         
    \end{theorem}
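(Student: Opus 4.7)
I would follow the strategy of Conlon, Fox and Sudakov, but strengthen the embedding step to eliminate the logarithmic loss that forces the $\log m$ factor in the $3$-uniform case, and arrange the argument uniformly for all $k\ge 3$.

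\textbf{Setup.} Let $t=\lceil\sqrt{m}\rceil$ and partition $V(H)$ into the \emph{heavy} vertices $V_h=\{v:d_H(v)>t\}$ and the \emph{light} vertices $V_\ell=V(H)\setminus V_h$. A degree-sum argument gives $|V_h|\le km/t=O(\sqrt m)$, while every light vertex is incident to at most $\sqrt m$ edges. The idea is to embed the (few) heavy vertices via an Erd\H{o}s--Rado pigeonhole that exhausts most of the tower budget, and then to embed the light vertices greedily into an extremely canonical residual structure.

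\textbf{Erd\H{o}s--Rado phase.} Given a $q$-coloring of $K_N^{(k)}$ with $N=\tw_k(C_{k,q}\sqrt m)$, I would iteratively pick candidate images $x_1,\dots,x_h$ for the heavy vertices $u_1,\dots,u_h$ ($h=O(\sqrt m)$), maintaining a shrinking reservoir $R_0\supseteq R_1\supseteq\cdots\supseteq R_h$ with the property that on $R_i$ the color of any $k$-set $\{x_{j_1},\dots,x_{j_s}\}\cup T$ with $T\in\binom{R_i}{k-s}$ depends only on the index set $\{j_1,\dots,j_s\}$. One tower level is lost per step, so $h=O(\sqrt m)$ steps leave a reservoir $R=R_h$ of size at least $\tw_{k-1}(C'\sqrt m)$. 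A further $q^{O(\sqrt m)}$-pigeonhole over the canonical colors of edges touching $V_h$ then lets us select the $x_i$'s so that \emph{every} edge of $H$ containing at least one heavy vertex will be automatically assigned one and the same target color $c^\star$; this cost is absorbed in $C_{k,q}$.

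\textbf{Embedding the light vertices.} It remains to embed $H_\ell$, the restriction of $H$ to $V_\ell$, into $R$ in color $c^\star$. Here $H_\ell$ has maximum degree $\le\sqrt m$ and at most $m$ edges. After fixing the images of the heavy vertices, each light vertex $w$ imposes constraints coming from the link $N_H(w)$, which is a $(k-1)$-uniform hypergraph with at most $\sqrt m$ edges. By the inductive hypothesis on the uniformity, $r_{k-1}(N_H(w);q)\le\tw_{k-1}(C_{k-1,q}\,m^{1/4})$, which is vastly smaller than $|R|$, so individually each light vertex has room to be placed. For the base case $k=3$ the corresponding inner bound is Sudakov's $2^{c\sqrt m}$ bound for graphs with $m$ edges, which extends to $q$ colors by standard methods.

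\textbf{Main obstacle.} The delicate point is synchronizing the greedy embedding across all light vertices simultaneously: edges of $H_\ell$ among light vertices force each new placement to be compatible with previously placed ones, and a naive union bound on the $\sqrt m$ constraints per vertex is exactly what costs an extra logarithm in the previous $k=3$ bound. The plan is to circumvent this via a dependent-random-choice or supersaturation step applied inside $R$: locate a sub-reservoir $R^\star\subseteq R$ in which, for every possible "link type" needed later, the set of compatible images forms a dense quasirandom subset. One then places the light vertices one at a time while maintaining that each such dense subset shrinks only by a controlled multiplicative factor, so the process survives all $|V_\ell|$ rounds without ever running out. Establishing that this dependent-random-choice step gives the right quantitative savings, uniformly in $k$, is the technical heart of the proof and is where the improvement over \cite{conlon2009ramsey} must come from.
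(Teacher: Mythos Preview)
Your proposal is not a proof: it is an outline that explicitly leaves the decisive step unfilled. You yourself flag that the ``delicate point'' is synchronizing the greedy placement of light vertices so as to avoid the $\log m$ loss, and you then say only that ``the plan is to circumvent this via a dependent-random-choice or supersaturation step'' and that ``establishing that this \ldots\ gives the right quantitative savings \ldots\ is the technical heart of the proof.'' That sentence is precisely where the new idea must live, and you have not supplied one. A nonspecific invocation of dependent random choice does not suffice; the previous argument of Conlon, Fox and Sudakov already uses such tools and still incurs the logarithm for $k=3$. Two further technical remarks: (i) the phrase ``one tower level is lost per step'' in your Erd\H{o}s--Rado phase is not correct as stated (with $h=O(\sqrt m)$ steps that would exhaust far more than a single tower level), and the subsequent ``$q^{O(\sqrt m)}$-pigeonhole'' to force all heavy-touching edges to the same color $c^\star$ is not justified, since the number of canonical color types is governed by $\binom{h}{\le k-1}$ rather than by $h$; (ii) your assertion that Sudakov's $2^{c\sqrt m}$ bound ``extends to $q$ colors by standard methods'' is, as the introduction of this paper notes, an open problem.

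For comparison, the paper abandons the heavy/light greedy embedding paradigm entirely. The key new ingredient is a structural lemma: any $k$-uniform $H$ with $m$ edges admits a \emph{strong coloring} into $t=O(\sqrt m)$ classes whose sizes have product only $2^{O(\sqrt m)}$ (achieved by bucketing vertices dyadically by degree and greedily coloring each bucket). One then uses Erd\H{o}s--Rado plus supersaturation to find $(N/R)^t/q$ monochromatic copies of $K_t^{(k)}$ in the host, views these copies as edges of an auxiliary $t$-uniform hypergraph, and applies a quantitative K\H{o}v\'ari--S\'os--Tur\'an bound $\ex(N,K_{s_1,\dots,s_t})<P\,N^{t-1/P}$ with $P=\prod s_i=2^{O(\sqrt m)}$ to extract a monochromatic complete $t$-partite $k$-graph with the prescribed part sizes, into which $H$ embeds by virtue of the strong coloring. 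The control on $\prod s_i$, not merely on $t$, is exactly what replaces the missing synchronization step in your outline.
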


We also provide a construction showing that the above bound is tight up to a constant factor in front of $\sqrt m$ . 
Although in the graph case the tightness follows simply by considering a clique of appropriate size, for higher uniformities the construction is rather involved and is obtained by using the paths in expander graphs. Due to our reliance on the stepping-up lemma, the construction requires $4$ colors.

    \begin{theorem} \label{thm:lower-bound}
        For any $k \ge 2,$ there exist a constant $c_k > 0$ such that for any positive integer $m$ there is a $k$-uniform hypergraph with $m$ hyperedges and no isolated vertices whose $4$-color Ramsey number is at least $\tw_k(c_k \sqrt{m}).$
    \end{theorem}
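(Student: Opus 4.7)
The plan is induction on $k$. The base case $k=2$ is classical: take $n=\lfloor\sqrt{2m}\rfloor$, let $H$ consist of $K_n$ together with $m-\binom{n}{2}=O(\sqrt m)$ pendant edges attached through fresh endpoints to an arbitrary vertex of $K_n$, so that $H$ has exactly $m$ edges and no isolated vertices. The standard probabilistic lower bound extends to four colors and gives $r_2(H;4)\ge r_2(K_n;4)\ge 2^{cn}\ge\tw_2(c_2\sqrt m)$.

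For $k\ge 3$, I proceed inductively. Assume we already have a $(k-1)$-uniform hypergraph $H_{k-1}$ with at most $m$ edges, no isolated vertices, and $r_{k-1}(H_{k-1};4)\ge N:=\tw_{k-1}(c_{k-1}\sqrt m)$. I would fix a $4$-coloring $\chi$ of $K_N^{(k-1)}$ avoiding monochromatic $H_{k-1}$ and lift it to a $4$-coloring $\chi'$ of $K_{2^N}^{(k)}$ via the standard stepping-up: for $x_1<\cdots<x_k$ with $d_i=\delta(x_i,x_{i+1})$, color $\{x_1,\ldots,x_k\}$ by $\chi(\{d_1,\ldots,d_{k-1}\})$ when $(d_1,\ldots,d_{k-1})$ is monotone, and by one of two extra colors otherwise, depending only on the monotonicity pattern. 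It now suffices to build $H_k$ with $O(m)$ edges having no monochromatic copy in $\chi'$; the bound $r_k(H_k;4)\ge 2^N=\tw_k(c_k\sqrt m)$ then follows.

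The naive cone $H_k=\{\{*\}\cup e:e\in E(H_{k-1})\}$ has the right edge count but fails, because a monochromatic embedding can map $*$ anywhere in the binary order, so the delta reduction need not recover $H_{k-1}$. Instead I would use paths in a $d$-regular bipartite expander $G$ on a vertex set $W$ of polynomial size, taking $V(H_k)=W$ and letting $E(H_k)$ consist of the vertex sets of length-$(k-1)$ walks in $G$ that encode edges of $H_{k-1}$ through a fixed bijection between $V(H_{k-1})$ and certain walk-types in $G$; $d$ is chosen so that the total number of such walks is $\Theta(m)$. The expander mixing lemma then shows that in any would-be monochromatic embedding of $H_k$ into $\chi'$, all but a negligible fraction of the walks have monotone delta sequences; pigeonholing on the monotonicity type yields a sub-system of walks whose deltas paste together (using the bipartite structure of $G$) into a well-defined map of $V(H_{k-1})$ into $[N]$ under which the edges of $H_{k-1}$ appear monochromatically in $\chi$, contradicting the choice of $\chi$.

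The main technical obstacle will be the monotonicity-forcing step: in the $4$-color step-up a $k$-edge can be monochromatic either through the ``native'' color coming from a monotone delta sequence or through one of the two extra ``non-monotone'' colors. The expander is used to rule out the non-monotone case globally, since too many non-monotone walks would violate expansion; the two monotone cases (increasing and decreasing) are then merged via the bipartition of $G$, which identifies a walk with its reversal without leaving $E(H_k)$. Tracking constants carefully, $c_k$ comes out as a definite fraction of $c_{k-1}$ depending only on $k$, and the induction closes, proving Theorem~\ref{thm:lower-bound}.
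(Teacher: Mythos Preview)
Your outline has the right ingredients (the step-up coloring together with paths in an expander), but the inductive scheme you set up is not the one that actually works, and two of the key steps are not justified.

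First, you try to build $H_k$ from an \emph{arbitrary} $H_{k-1}$ coming from the previous stage of the induction, via ``walks that encode edges of $H_{k-1}$ through a fixed bijection between $V(H_{k-1})$ and certain walk-types.'' This is not a definition of a hypergraph; and more importantly, once you embed $H_k$ into the step-up coloring, the $\delta$-values are determined by the \emph{embedding}, not by any combinatorial ``type'' of the walk, so there is no reason different walks of the same type should produce the same $\delta$. Hence your ``pasting together'' cannot yield a well-defined map $V(H_{k-1})\to[N]$. The paper sidesteps this entirely: it does \emph{not} construct $H_k$ from $H_{k-1}$. It fixes one expander $G$ on $\Theta(n)$ vertices, sets $H_k=H(G,k)$ (every path on $k-1$ vertices plus one extra vertex), and the induction is on the \emph{step-down inside the coloring}: from a monochromatic copy of $H(G[U],\ell)$ in $\phi_n^{(\ell)}$ one extracts a subset $U'\subseteq U$ with $|U'|\ge|U|/1000$ and a monochromatic copy of $H(G[U'],\ell-1)$ in $\phi_n^{(\ell-1)}$. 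The point is that the family $H(G[\cdot],\cdot)$ is closed under passing to subsets and lowering uniformity, so one never needs to recover a fixed $H_{k-1}$.

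Second, your monotonicity-forcing step (``expander mixing shows all but a negligible fraction of the walks have monotone delta sequences'') is not an argument. Mixing controls edge counts between vertex sets of $G$; it says nothing directly about monotonicity of $\delta$-sequences of images under an arbitrary injection into $\{0,\dots,2^N-1\}$. In the paper this step is the real work: one runs a dyadic splitting of the image set to isolate a large interval $Z^*$ and a large ``outside'' set $W$; expansion then forces many vertices of $\Psi^{-1}(Z^*)$ to have a neighbour in $\Psi^{-1}(W)$. A second dyadic process on this subset either already yields the monotone $\delta$-sequence, or produces an index $i$ and a concrete path $P$ straddling $S_i^{p_i}$ and $S_i^{1-p_i}$ from which one writes down two specific edges $e,f\in E(H)$ receiving different colours under $\phi_n^{(\ell)}$ --- a contradiction. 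The crucial feature used here is exactly the ``path plus arbitrary extra vertex'' definition of $H(G,\ell)$: it lets you freely attach or swap one vertex to manufacture the bad pair $e,f$. Your walk-encoding of $H_{k-1}$ has no such freedom, so the contradiction step has no obvious analogue.
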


The rest of this short paper is organized as follows. In Section 2 we prove Theorem~\ref{thm:upper-bound} and in Section 3 we prove Theorem~\ref{thm:lower-bound}. We systematically ignore floor and ceiling signs whenever they are not crucial for the argument. In the use of asymptotic notation we sometimes omit the dependence on the uniformity, $k$, and the number of colours, $q$, since we treat them as constants.
    
    \section{Proof of Theorem~\ref{thm:upper-bound}}
    Before presenting the proof, let us give a brief outline. The main new idea is to show that every hypergraph with $m$ edges has a strong coloring (see Definition~\ref{def:strong-coloring}) with $t = O(\sqrt{m})$ colors such that the product of the sizes of the color classes is $2^{O(\sqrt{m})}.$ Given a colored complete $k$-uniform hypergraph $G$ on $\tw_k(C \sqrt{m})$ vertices, we then apply Erd\H{o}s and Rado's upper bound on hypergraph Ramsey numbers mentioned in the introduction along with a simple supersaturation argument to find many monochromatic cliques of size $t$ in $G$. Then, it is enough to find a set of cliques of one colour which form a complete $t$-partite hypergraph with part sizes corresponding to the color classes of the given strong coloring. This will follow from a version of the hypergraph extension of the K\H{o}v\'{a}ri-S\'{o}s-Tur\'{a}n theorem \cite{kovari-sos-turan}. Such an extension was first proved by Erd\H{o}s \cite{erdos-hypergraph-kst}. In our setting, the number of parts and their sizes are allowed to grow with the size of the hypergraph. The aforementioned result of Erd\H{o}s does not provide such a bound, though it can easily be extracted from most of the known proofs.
        
    Let $K_{s_1, \dots, s_t}$ denote the complete $t$-partite $t$-uniform hypergraph with part sizes $s_1, \dots, s_t$ and denote by $\ex(n, K_{s_1, \dots, s_t})$ the maximum number of edges in a $t$-uniform hypergraph not containing $K_{s_1, \dots, s_t}$ as a subgraph.

    We require an upper bound on $\ex(n, K_{s_1, \dots s_t}),$ where the number of parts and their sizes are allowed to grow with $n.$ Such an upper bound is surely widely known, but we have not found a reference which contains the bound we need. Hence, we include the short proof for completeness. Note that the exponent of $n$ in our bound is not best possible, however, it is sufficient for our purposes and allows for a cleaner proof.
\begin{lemma} \label{lem:kovari-sos-turan}
    Let $s_1, \dots, s_t$ be positive integers and denote $P = \prod_{i=1}^t s_i.$ Then, for all $n \ge 1,$
    \[ \ex(n, K_{s_1, \dots, s_t}) < Pn^{t - P^{-1}}. \]
\end{lemma}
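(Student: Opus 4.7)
The plan is to prove the lemma by induction on the uniformity $t$. The base case $t=1$ is trivial: a $1$-uniform copy of $K_{s_1}$ is simply a set of $s_1$ edges, so $\ex(n,K_{s_1})=s_1-1$, which is strictly smaller than $s_1n^{1-1/s_1}$ for every $n\ge 1$.

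For the inductive step I would use the standard common-link operation. Given a $t$-uniform hypergraph $H$ on $n$ vertices with no $K_{s_1,\dots,s_t}$, and an $s_t$-subset $S\subseteq V(H)$, let
\[
L(S)=\bigl\{T\in\tbinom{V\setminus S}{t-1}:T\cup\{v\}\in E(H)\text{ for every }v\in S\bigr\}
\]
be the common link of $S$, regarded as a $(t-1)$-uniform hypergraph. A copy of $K_{s_1,\dots,s_{t-1}}$ inside $L(S)$ would combine with $S$ to produce a forbidden $K_{s_1,\dots,s_t}$ in $H$, so $L(S)$ is $K_{s_1,\dots,s_{t-1}}$-free. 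Setting $P'=s_1\cdots s_{t-1}=P/s_t$, the inductive hypothesis gives $|L(S)|<P'n^{(t-1)-1/P'}$ for every $S$, and summing over the $\binom{n}{s_t}$ choices of $S$ yields the upper bound
\[
\sum_{S}|L(S)|<\binom{n}{s_t}P'\,n^{(t-1)-1/P'}.
\]

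Next I would double-count the same sum by $(t-1)$-sets. If $d(T)$ denotes the codegree of $T$ — the number of vertices $v$ with $T\cup\{v\}\in E(H)$ — then $\sum_S|L(S)|=\sum_T\binom{d(T)}{s_t}$ and $\sum_Td(T)=t|E(H)|$. Convexity of $\binom{\cdot}{s_t}$ combined with Jensen's inequality lower-bounds this sum by $\binom{n}{t-1}\binom{\bar d}{s_t}$, where $\bar d=te/\binom{n}{t-1}$; the routine estimate $\binom{\bar d}{s_t}\ge(\bar d/s_t)^{s_t}$ (valid when $\bar d\ge s_t$) turns this into a lower bound polynomial in $e$. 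Comparing the two bounds, the exponents of $n$ match exactly because of the identity $s_t/P=1/P'$, and rearranging will yield $e<Pn^{t-1/P}$.

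The main obstacle is bookkeeping rather than substance: one must verify that the multiplicative constant produced by the comparison is at most $P$, and handle separately the degenerate regime $\bar d<s_t$ in which the Jensen step is vacuous. The second case is painless, since $\bar d<s_t$ implies $e<s_t\binom{n}{t-1}/t\le Pn^{t-1}\le Pn^{t-1/P}$ for $n\ge 1$. The generous slack in the deliberately suboptimal exponent $t-1/P$ ensures that the constant comparison in the main case goes through cleanly.
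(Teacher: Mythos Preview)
Your proposal is correct and follows essentially the same approach as the paper: induction on $t$, the common-link hypergraph, the double-counting identity $\sum_S |L(S)|=\sum_T\binom{d(T)}{s_t}$, and Jensen's inequality. The only cosmetic difference is that the paper, assuming $e\ge Pn^{t-1/P}$, uses the lower bound on $\sum_W|N(W)|$ together with pigeonhole to locate a single $W$ with $|N(W)|\ge P'n^{t-1-1/P'}$ and then invokes the induction hypothesis once, whereas you invoke induction on every link and compare the resulting upper bound on the sum with the Jensen lower bound; the two are logically equivalent and yield the same exponent via the identity $s_t/P=1/P'$.
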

\begin{proof}   
    We prove the statement by induction on $t.$ For $t=1,$ the claim is trivial. Assume now $t \ge 2$ and let $H$ be a $t$-uniform hypergraph with $m \ge P n^{t - P^{-1}}$ edges. We need to show that $H$ contains a copy of $K_{s_1, \dots, s_t}.$ For $W \in \binom{V(H)}{s_t},$ let
    \[ N(W) = \left\{ f \in \binom{V(H)}{t-1} \, \vert \, f \cup \{w\} \in E(H), \forall w \in W \right\}. \]
    For $f \in \binom{V(H)}{t-1},$ let $d(f)$ denote the number of edges of $H$ containing the set $f.$ Double counting, we have
    \[ \sum_{W \in \binom{V(H)}{s_t}} |N(W)| = \sum_{f \in \binom{V(H)}{t-1}} \binom{d(f)}{s_t}. \]
    Using that $\binom{x}{s}$ is convex and $\sum_{f \in \binom{V(H)}{t-1}} d(f) = tm,$ we can apply Jensen's inequality to obtain
    
    \[ \sum_{W \in \binom{V(H)}{s_t}} |N(W)| \ge \binom{n}{t-1} \binom{tm / \binom{n}{t-1}}{s_t} \ge 
    \frac{n \cdots (n-t+2)}{(t-1)!} \cdot \left(\frac{t! m}{s_t \cdot n \cdots (n-t+2)}\right)^{s_t} \ge \frac{m^{s_t}}{s_t^{s_t} \cdot n^{(t-1)(s_t-1)}}. \]
    Denoting $P' = \prod_{i=1}^{t-1} s_i = P / s_t,$ by the pigeonhole principle, there is a set $W$ with
    \[ |N(W)| \ge \binom{n}{s_t}^{-1} \cdot \frac{m^{s_t}}{s_t^{s_t} \cdot n^{(t-1)(s_t-1)}} \ge \frac{(P n^{t-P^{-1}})^{s_t}}{s_t^{s_t} \cdot n^{(t-1)(s_t-1) + s_t}} \ge \frac{s_t^{s_t} P' n^{ts_t - (P')^{-1}}}{s_t^{s_t} \cdot n^{ts_t - t + 1}} = P' n^{t-1-(P')^{-1}}. \] 
    By the induction hypothesis, the $(t-1)$-uniform hypergraph formed by the edge set $N(W)$ contains a copy of $K_{s_1, \dots, s_{t-1}},$ which together with $W$ forms $K_{s_1, \dots, s_t},$ as required.
\end{proof}

\begin{defn} \label{def:strong-coloring}
    A \emph{strong coloring} of a hypergraph $H$ is a partition of $V(H)$ into \emph{color classes} $V_1, \dots, V_t$ such that every edge of $H$ contains at most one vertex from each of the sets $V_1, \dots, V_t.$
\end{defn}

\begin{lemma} \label{lem:coloring}
    Fix $k \ge 2$ and let $H$ be a $k$-uniform hypergraph with $m$ edges and no isolated vertices. Then, there is a strong coloring $V_1 \cupdot \dots \cupdot V_t$ of $H$ such that $t = O(\sqrt{m})$ and moreover, $\prod_{i=1}^t |V_i| \le 2^{O(\sqrt{m})}.$
\end{lemma}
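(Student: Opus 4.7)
My plan would be to reduce to the shadow graph and then argue by induction. Strong colorings of $H$ are exactly proper vertex-colorings of its shadow graph $G$, whose vertex set is $V(H)$ and whose edges join pairs of vertices lying in a common hyperedge of $H$; then $G$ has no isolated vertices (since $H$ does not and $k\ge 2$) and at most $\binom{k}{2}m$ edges. A standard averaging argument shows $G$ is $O(\sqrt m)$-degenerate, so $\chi(G)\le O(\sqrt m)$, which already secures the bound on $t$ for any proper coloring we build. The substantive content of the lemma is the product bound.

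I would prove the quantitative claim ($t\le C_1\sqrt m$ and $\prod_i|V_i|\le 2^{C_2\sqrt m}$, with constants $C_1,C_2$ depending only on $k$) by strong induction on $m$, with an auxiliary induction on the uniformity $k$, and split into two cases. If some vertex $v\in V(H)$ has hypergraph-degree above a threshold $T$ of order $\sqrt m\log m$, I would peel $v$ together with the set $I$ of vertices of $H-v$ that become isolated after deleting $v$, and apply the induction to $H-v-I$, which has no isolated vertices and at most $m-d_H(v)$ edges. Then $v$ goes into its own singleton class, and $I$ is colored separately via induction on $k$ applied to the $(k-1)$-uniform link hypergraph $\{e\setminus\{v\}:v\in e\in H\}$ on $N_H(v)$, which controls all constraints on $I$ since every hyperedge through a vertex of $I$ must pass through $v$. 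The drop of at least $T$ in the edge count gives slack of order $\log m$ in both the color-count budget $C_1\sqrt m$ and the log-product budget $C_2\sqrt m$, and a short calculation using $\sqrt m-\sqrt{m-d_H(v)}\ge d_H(v)/(2\sqrt m)$ shows this slack absorbs both the new singleton for $v$ and the $O(\sqrt{d_H(v)})$ extra classes and $2^{O(\sqrt{d_H(v)})}$ extra product factor generated by the auxiliary coloring of $I$.

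In the complementary case every vertex has degree below $T$, so $G$ has maximum degree $O(\sqrt m\log m)$ and can be greedily colored in a degeneracy ordering using $O(\sqrt m)$ colors. In this greedy procedure a vertex assigned color $i$ must have at least $i-1$ backward neighbors, so $|V_i|\le 2|E(G)|/(i-1)$; Stirling then gives the naive estimate $\sum_i\log|V_i|=O(\sqrt m\log m)$. Removing the spurious $\log m$ factor to reach the target $O(\sqrt m)$ is the main obstacle and, I expect, where the essentially new idea sits — most plausibly a refined multi-round peeling by degree buckets, in which singletons for very high-degree vertices are combined with a small number of precomputed color classes of geometrically decreasing size, so that the class-size profile decays like $n,n/2,n/4,\ldots,1,1,\ldots$ rather than staying balanced at $\sim\sqrt m$. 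Together with the dense-vertex peeling of the previous paragraph, such a decay gives $\sum_i\log|V_i|\le O((\log m)^2)+O(\sqrt m)=O(\sqrt m)$, closing the induction.
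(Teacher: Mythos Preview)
Your Case~1 budget does not close. Writing $d=d_H(v)$, the slack you gain in the color-count budget $C_1\sqrt m$ is $C_1(\sqrt m-\sqrt{m-d})=\Theta(d/\sqrt m)$, whereas coloring $I$ via the $(k-1)$-uniform link costs $\Theta(\sqrt d)$ new classes (and $2^{\Theta(\sqrt d)}$ in the product). For the slack to cover the cost you would need $d/\sqrt m \gtrsim \sqrt d$, i.e.\ $d=\Omega(m)$; but you only assume $d\ge T\sim\sqrt m\log m$. For instance at $d=m^{3/4}$ the slack is $\Theta(m^{1/4})$ while the cost is $\Theta(m^{3/8})$, so the induction fails. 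There is also a correctness gap: an edge of $H$ through $v$ may contain two vertices of $H-v-I$, and your recursive strong coloring of $H-v-I$ ignores such edges, so those two vertices can land in the same class.

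In fact Case~1 is unnecessary; your Case~2 instinct (``multi-round peeling by degree buckets \ldots\ geometrically decreasing size'') is the entire argument, and the paper carries it out directly with no induction on $m$ or $k$. Set $\Delta=\sqrt m$. Put every vertex of degree $>\Delta$ into its own singleton class; since degrees sum to $km$ there are at most $k\sqrt m$ such vertices. For $i\ge 1$ let $U_i=\{v:\Delta/2^i<d(v)\le\Delta/2^{i-1}\}$ and greedily color $U_i$ with $t_i:=k\Delta/2^{i-1}$ fresh colors, which suffices because each $v\in U_i$ meets at most $(k-1)d(v)<t_i$ other vertices in edges of $H$. Then $t\le k\sqrt m+\sum_i t_i=O(\sqrt m)$. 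For the product, $|U_i|\le km/(\Delta/2^i)=2^ik\sqrt m$, so by AM--GM the classes inside $U_i$ contribute at most $(|U_i|/t_i)^{t_i}\le (2^{2i-1})^{t_i}=2^{O(i\sqrt m/2^i)}$; since $\sum_i i/2^i$ converges, $\prod_j|V_j|\le 2^{O(\sqrt m)}$. This is exactly the ``geometrically decaying class-size profile'' you were reaching for, obtained in one pass rather than via a peeling induction.
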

\begin{proof}
    We first partition the vertices of $H$ according to their degree as follows. Set $\Delta \coloneqq \sqrt{m}$ and denote $s = \floor{\log_2 \Delta} + 1.$ Let $U_0 = \{ v \in V(H) \, \vert \,  d(v) > \Delta \}$ and for $1 \le i \le s,$ let $U_i = \{ v \in V(H) \, \vert \, \Delta / 2^i < d(v) \le \Delta / 2^{i-1} \}.$ Since $V(H)$ has no isolated vertices, it is clear that $U_0 \cupdot \dots \cupdot U_s$ is a partition of $V(H).$ We color each of the sets $U_i$ using distinct colors. Each vertex in $U_0$ receives a distinct color. For $i \ge 1,$ the vertices in $U_i$ are greedily colored one by one using at most $t_i \coloneqq k \Delta / 2^{i-1}$ colors. This is possible since having colored some vertices in $U_i,$ the next vertex $v \in U_i$ to be colored shares an edge with at most $(k-1) d(v) \le (k-1) \Delta / 2^{i-1} \le t_i - 1$ previously colored vertices in $U_i.$ 
    
    Let $V_1, \dots, V_t$ denote the color classes produced by the coloring described above. It remains to verify that it satisfies the desired properties. To this end, for $0 \le i \le s,$ we denote $n_i = |U_i|$ and $m_i = \sum_{v \in U_i} d(v).$ Clearly, $\sum_{i=0}^s m_i = \sum_{v \in V(H)} d(v) = km$ and for $0 \le i \le s,$ we have $km \ge m_i \ge n_i \cdot \Delta / 2^i.$ In particular, $n_0 \le km / \Delta = k \sqrt{m}$.
    
    The number of colors used, $t,$ satisfies
    \[ t \le n_0 + \sum_{i=1}^s t_i \le k \sqrt{m} + \sum_{i=1}^s k\sqrt{m} / 2^{i-1} = O(\sqrt{m}). \]
    
    Recall that $t_i = k \Delta / 2^{i-1}$ and $n_i \le 2^i m_i / \Delta \le 2^i km / \Delta.$ By the AM-GM inequality, the product of the sizes of the color classes used to color $U_i, i \ge 1$ is at most 
    \[ \left(\frac{n_i}{t_i}\right)^{t_i} < \left( \frac{4^i m}{\Delta^2} \right)^{k \Delta / 2^{i-1}} = (4^i)^{k\Delta / 2^{i-1}} =2^{k\Delta i/2^i}. \]

    Multiplying the above bound for all $1 \le i \le s$ and using that the series $\sum_{i=1}^\infty i / 2^i$ converges, we obtain
    \[ \prod_{j=1}^t |V_j| \le \prod_{i=1}^s 2^{k\Delta i/2^i} = 2^{O_k(\sqrt{m})}. \]
    
\end{proof}

As mentioned in the outline, we will use the following bound on the Ramsey number of a complete $k$-uniform hypergraph.
\begin{theorem}[Erd\H{o}s, Rado \cite{erdos-rado}] \label{thm:erdos-rado}
    For positive integers $q, k$ there is a constant $C' = C'(q, k)$ such that $r_k(n; q) \le \tw_k(C'n).$
\end{theorem}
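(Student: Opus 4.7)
The plan is to prove this classical result by induction on $k$. The base case $k=2$ is the Erd\H{o}s--Szekeres argument: given a $q$-coloring of $K_N$, iteratively pick $v_i$ from the current remaining set $V_{i-1}$, use pigeonhole on the $q$ colors to find a majority color $c_i$ and a residual set $V_i \subseteq V_{i-1} \setminus \{v_i\}$ with $|V_i| \ge (|V_{i-1}|-1)/q$. After $M \ge qn$ steps the sequence $c_1, \dots, c_M$ has at least $M/q \ge n$ entries equal to some common color $c$, and the corresponding vertices span a monochromatic $K_n$ of color $c$. This gives $r_2(n;q) \le q^{qn} = \tw_2(C_2 n)$ with $C_2 = q \log_2 q$.

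For the inductive step, fix $k \ge 3$, assume $r_{k-1}(n;q) \le \tw_{k-1}(C_{k-1} n)$, and set $M = \tw_{k-1}(C_{k-1} n)$. Given a $q$-colored $K_N^{(k)}$, I would run an analogous stepping-down procedure: maintain a sequence $v_1, \dots, v_M$ and nested sets $V(K_N^{(k)}) = V_0 \supset V_1 \supset \dots \supset V_M$. At step $i$, pick any $v_i \in V_{i-1}$; every $(k-1)$-subset $\{j_1, \dots, j_{k-1}\} \subseteq \{1, \dots, i\}$ yields a $q$-coloring of $V_{i-1} \setminus \{v_i\}$ via $w \mapsto c(\{v_{j_1}, \dots, v_{j_{k-1}}, w\})$. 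The common refinement of these $\binom{i}{k-1}$ colorings has at most $q^{\binom{i}{k-1}}$ classes; let $V_i$ be a largest one, so $|V_i| \ge (|V_{i-1}|-1)/q^{\binom{i}{k-1}}$. Provided $N \ge q^{\sum_{i=1}^M \binom{i}{k-1}} + M = q^{O(M^k)}$, the procedure runs to completion.

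The crucial property is that for any $i_1 < \dots < i_k \le M$, the color $c(\{v_{i_1}, \dots, v_{i_k}\})$ depends only on $(i_1, \dots, i_{k-1})$: since $v_{i_k} \in V_{i_k - 1} \subseteq V_{i_{k-1}}$, the refinement at step $i_{k-1}$ already fixed the color of $\{v_{i_1}, \dots, v_{i_{k-1}}, w\}$ for all $w \in V_{i_{k-1}}$. This yields a well-defined $q$-coloring $\chi$ of the $(k-1)$-subsets of $\{1, \dots, M\}$. Since $M \ge r_{k-1}(n;q)$ by the inductive hypothesis, $\chi$ contains a monochromatic $K_n^{(k-1)}$ on some index set $I \subseteq \{1, \dots, M\}$, and then $\{v_i : i \in I\}$ is a monochromatic $K_n^{(k)}$ in the original coloring.

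The remaining step is the arithmetic verification that $N = q^{O(M^k)}$ is at most $\tw_k(C_k n) = 2^{\tw_{k-1}(C_k n)}$ for a suitable $C_k$. For $k = 3$ this reduces to $O(2^{3 C_2 n}) \le 2^{C_3 n}$, which holds for $C_3 = 3 C_2 + O(1)$. For $k \ge 4$ it reduces to $k \cdot \tw_{k-2}(C_{k-1} n) \le \tw_{k-2}(C_k n)$, which holds for $C_k = C_{k-1} + O(1)$ since even a constant additive increase in the argument of $\tw_{k-2}$ multiplies its value by an arbitrarily large factor. I do not anticipate any genuine obstacle, as this is a standard argument; the only care needed is in tracking constants through the tower recursion.
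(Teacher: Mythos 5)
Your proposal is correct: it is the standard Erd\H{o}s--Rado ``stepping-down'' argument (refining the residual vertex set so that the color of $\{v_{i_1},\dots,v_{i_k}\}$ depends only on the first $k-1$ indices, then invoking the $(k-1)$-uniform bound), which is precisely the proof in the cited reference; the paper itself states the theorem without proof. The only loose end is minor bookkeeping --- you should take $N \ge M\cdot q^{\sum_i \binom{i}{k-1}}$ rather than $q^{\sum_i\binom{i}{k-1}}+M$ to keep $V_{i-1}$ nonempty throughout, which is absorbed by your $q^{O(M^k)}$ anyway.
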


\begin{proof}[Proof of Theorem~\ref{thm:upper-bound}]
    Let $m = e(H)$ and let $N = \tw_k(C \sqrt{m})$ where $C = C(k, q)$ is a large constant to be chosen implicitly later. Consider an arbitrary $q$-coloring of the complete $k$-uniform hypergraph on $N$ vertices and call this colored hypergraph $G$. We need to show that $G$ contains a monochromatic copy of $H.$
    
    Let $V_1 \cupdot \dots \cupdot V_t = V(H)$ be a strong coloring of $H$ satisfying $t \le O(\sqrt{m})$ and $P \coloneqq \prod_{i=1}^t |V_i| \le 2^{O(\sqrt{m})}$ given by Lemma~\ref{lem:coloring}. We remark that $P$ will correspond to the value of $P$ in our application of Lemma~\ref{lem:kovari-sos-turan}. We denote $s_i = |V_i|$ for $i \in [t].$ Let $R = r_k(t; q) \le \tw_k(O(\sqrt{m})),$ where the bound holds by Theorem~\ref{thm:erdos-rado}. A standard supersaturation argument allows us to find many monochromatic copies of $K^{(k)}_t$ in $G$ of the same color. Indeed, by definition, every set of $R$ vertices of $G$ contains a monochromatic copy of $K^{(k)}_t$. On the other hand, any copy of $K^{(k)}_t$ is contained in $\binom{N-t}{R-t}$ sets of $R$ vertices of $G$. Putting these two facts together and applying the pigeonhole principle, there is a color, say red, such that the number of red copies of $K^{(k)}_t$ in $G$ is at least
    \begin{equation}
        \binom{N}{R} / \left(q \binom{N - t}{R-t} \right) \ge \left( \frac{N}{R} \right)^t / q. \label{eq:edges-gamma}
    \end{equation}
    We construct an auxiliary $t$-uniform hypergraph $\Gamma$ on the vertex set $V(G)$ where a $t$-set forms an edge if it forms a red $t$-clique in $G.$ Provided that $e(\Gamma) > \ex(N, K^{(t)}_{s_1, \dots, s_t}),$ there must exist a copy of $K^{(t)}_{s_1, \dots, s_t}$ in $\Gamma$ which corresponds to a red complete $t$-partite $k$-uniform hypergraph with part sizes $s_1, \dots, s_t$ in $G$ and by the existence of the strong coloring $V(H) = V_1 \cupdot \dots \cupdot V_t,$ it contains a red subgraph isomorphic to $H.$
    
    It remains to ensure that $e(\Gamma) > \ex(N, K^{(t)}_{s_1, \dots, s_t}).$ Recall that $P = \prod_{i=1}^{t} |V_i| = 2^{O(\sqrt{m})}$. Hence, by \eqref{eq:edges-gamma} and Lemma~\ref{lem:kovari-sos-turan}, it is enough to show that
    \[ \left(\frac{N}{R} \right)^t / q > N^{t - 2^{-O(\sqrt{m})}}, \]
    or equivalently,
    \[ R^t q < N^{2^{-O (\sqrt{m})}}. \]
    It will be convenient to compare the logarithms of the two sides. We remind the reader that $\tw_k(x) = 2^{\tw_{k-1}(x)}$ for $k \ge 2.$ Thus, we have
    \[ \log_2(R^t q) = t \log_2(R) + O(1) = O(\sqrt{m}) \cdot \tw_{k-1}(O(\sqrt{m})) = \tw_{k-1}(O(\sqrt{m})), \]
    where in the last inequality we used that $k \ge 3.$ On the other hand,
    \[ \log_2(N^{2^{-O(\sqrt{m})}}) = \log_2(N) \cdot 2^{-O(\sqrt{m})} = \tw_{k-1}(C \sqrt{m}) \cdot 2^{-O(\sqrt{m})} \ge \tw_{k-1}(C/2 \cdot \sqrt{m}), \]
    where in the last inequality we used that $k\ge 3$ and chose $C$ to be large enough compared to the implicit constant in the $O$ notation. It follows that for large enough $C,$ we have
    $R^t q < N^{2^{-O(\sqrt{m})}},$ as needed.
\end{proof}

\section{Proof of Theorem~\ref{thm:lower-bound}}
In this section, we prove Theorem~\ref{thm:lower-bound}. We shall start with a few definitions which are used in the proof and present a variant of the step-up coloring that we use. After that, we give an informal discussion of the main ideas behind the proof and then we present the proof itself.

\subsection{Setup}
To begin, we recall an important function used in this construction. For a nonnegative integer $x,$ let $x = \sum_{i=0}^{\infty} a_i 2^i$ be its unique binary representation (where $a_i = 0$ for all but finitely many $i$). We denote $\bit(x, i) = a_i.$ Then $\delta(x, y) \coloneqq \max \{ i \in \mathbb{Z}_{\ge 0} \, \vert \, \bit(x, i) \neq \bit(y, i)\}.$ For nonnegative integers $x_1 < x_2 < \dots < x_t,$ we denote $\delta(\{x_1, \dots, x_t\}) = (\delta_1, \dots, \delta_{t-1})$ where for $i \in [t-1],$ $\delta_i = \delta(x_i, x_{i+1}).$ The following properties of this function are well known and easy to verify.

\begin{enumerate}[label=P\arabic*)]
    \item \label{prop:delta-smaller} $x < y \iff \bit(x, \delta(x, y)) < \bit(y, \delta(x, y))$.
    \item \label{prop:not-equal} For any $x < y < z,$ $\delta(x, y) \neq \delta(y, z)$.
    \item \label{prop:maximum} For any $x_1 < x_2 < \dots < x_k,$ $\delta(x_1, x_k) = \max_{1 \le i \le k-1} \delta(x_i, x_{i+1})$. 
\end{enumerate}

Let us now define the coloring which will be used to prove Theorem~\ref{thm:lower-bound}. For a positive integer $n$, we start with a red-blue coloring $\phi_n^{(2)}$ of the complete graph with vertex set $\{0, \dots, N_2-1\}$, where $N_2 = N_2(n) = 2^{n/2}$, containing no monochromatic clique of size $n.$ Such a coloring exists by the well known result of Erd\H{o}s mentioned in the introduction. For $k \ge 3,$ the coloring $\phi_n^{(k)}$ is on the vertex set $\{0, \dots, N_k - 1\},$ where $N_k = N_k(n) = 2^{N_{k-1}(n)} = \tw_k(n)$ and is defined as follows. For a set $\{x_1, \dots, x_k\}$ with $0 \le x_1 < \dots < x_k < N_k,$ we consider the vector $\delta(\{x_1, \dots, x_k\}) = (\delta_1, \dots, \delta_{k-1}).$ 
Note that $0 \le \delta_i < N_{k-1}$ for all $i \in [k-1]$. Hence for distinct $\delta_i$, the set $\{\delta_1, \dots, \delta_{k-1}\}$ forms an edge of the complete $(k-1)$-uniform hypegraph on $[N_{k-1}]$ with color $\phi^{(k-1)}_n(\{\delta_1, \dots, \delta_{k-1}\})$. For $k = 3,$ the $4$-coloring is given as:
\[
    \phi^{(3)}_n(\{x_1, x_2, x_3\}) = 
    \begin{cases}
        C_1, &\text{if } \delta_1 < \delta_2 \text{ and } \phi_n^{(2)}(\{\delta_1, \delta_2\}) \text{ is red;}\\
        C_2, &\text{if } \delta_1 < \delta_2 \text{ and } \phi_n^{(2)}(\{\delta_1, \delta_2\}) \text{ is blue;}\\
        C_3, &\text{if } \delta_1 > \delta_2 \text{ and } \phi_n^{(2)}(\{\delta_1, \delta_2\}) \text{ is red;}\\
        C_4, &\text{if } \delta_1 > \delta_2 \text{ and } \phi_n^{(2)}(\{\delta_1, \delta_2\}) \text{ is blue.}
    \end{cases}
\]

We denote by $\argmax_{i \in [k-1]} \delta_i$ the unique index $j \in [k-1]$ such that $\delta_j = \max_{i \in [k-1]} \delta_i$, where the uniqueness follows from Properties~\ref{prop:not-equal}~and~\ref{prop:maximum}. For $k \ge 4,$ the coloring is given as:

\[ 
    \phi^{(k)}_n(\{x_1, \dots, x_k\}) = 
\begin{cases}
    \phi^{(k-1)}_n(\{\delta_1, \dots, \delta_{k-1}\}), &\text{if } \delta \text{ is a monotone sequence;}\\
    C_1, &\text{if } \delta \text{ is not monotone and } \argmax_{i \in [k-1]} \delta_i \in \{1, k-1\}; \\
    C_2, &\text{if } \argmax_{i \in [k-1]} \delta_i \not\in \{1, k-1\}.
\end{cases}
\]

\subsection{Proof outline}
Let us now discuss the main ideas of our proof. First, we recall Erd\H{o}s and Hajnal's proof of the lower bound $r_k(n; 4) \ge \tw_k(2^{-k} n).$ Their proof uses a slightly different coloring than given above, but the same proof works with our coloring, so we consider it instead. Suppose that $\phi^{(k)}_n$ contains a monochromatic clique of size $n_k = 2^k n$. Denote by $x_1 < x_2 < \dots < x_{n_k}$ the vertices of this clique and let $\delta = (\delta_1, \dots, \delta_{n_k-1}) = \delta(\{x_1, \dots, x_{n_k}\}).$ It is not difficult to show that $\delta$ must contain a monotone contiguous subsequence $\delta' = (\delta_a, \delta_{a+1}, \dots, \delta_b)$ of length at least $n_k/2.$ By Property~\ref{prop:maximum} and the definition of $\phi^{(k)}_n,$ it follows that $\{\delta_a, \delta_{a+1}, \dots \delta_b\}$ forms a monochromatic clique in $\phi^{(k-1)}_n$ of size at least $n_k/2.$ Applying the same argument to this clique in $\phi^{(k-1)}_n$, we find a monochromatic clique of size at least $n_k / 4$ in $\phi^{(k-2)}_n$ and so on. After $k-2$ steps, we thus reach a monochromatic clique of size $4n$ in $\phi^{(2)}_n,$ a contradiction.

We will show that instead of a clique, we can take a much sparser hypergraph $H_k$ on $n_k = \alpha_k n$ vertices with $m = O(n_k^2) = O_k(n^2)$ edges and reach a similar conclusion, i.e. that $\phi^{(k-1)}_n$ contains a monochromatic copy of some $(k-1)$-uniform hypergraph $H_{k-1}$ on $\alpha_{k-1} n$ vertices, where $H_{k-1}$ is ``of the same form'' as $H_k.$ For the argument to work, we need to make sure of a few a things. With $x_1 < x_2 < \dots < x_{n_k}$ and $\delta = (\delta_1, \dots, \delta_{n_{k-1}})$ defined as above, we need that $\delta$ contains a monotone subsequence of length $\Omega(n_k).$ Furthermore, this monotone subsequence should imply the existence of a hypergraph $H_{k-1}$ on $\alpha_{k-1}n$ vertices on which we can apply induction. We remark here that $H_{k-1}$ will not be a fixed hypergraph, but rather some large enough hypergraph of the same form as $H_k$. Finally, after $k-2$ steps, we should reach a graph containing a clique of size $n$ to obtain a contradiction. Given that this argument works for a clique and we want a much sparser hypergraph, it should be no surprise that our construction is based on an expander graph. We next define our construction formally and carry out the outlined proof strategy.

\subsection{Formal proof}
Given a graph $G$ and an integer $k \ge 2,$ we define a $k$-uniform hypergraph $H = H(G, k)$ on the same vertex set where for every path $(v_1, \dots, v_{k-1})$ in $G$ and any vertex $v_k \in V(G)$ not on this path, we put the $k$-edge $\{v_1, \dots, v_k\}$ in $H$. Note that for $k=2,$ $H(G, k)$ is simply the complete graph on the vertex set $V(G).$


Given a $k$-uniform hypergraph $\Gamma$ with a coloring $\phi \colon E(\Gamma) \rightarrow \mathcal{C}$ and a hypergraph $H,$ a set of vertices $X \subseteq V(\Gamma)$ forms a monochromatic copy of $H$ if there exists a bijection $\Psi \colon V(H) \rightarrow X$ such that $\phi(\Psi(e)) = \phi(\Psi(e'))$ for all $e, e' \in E(H).$



We shall need the following simple lemma about sparse random graphs.

\begin{lemma} \label{lem:edge-distribution}
    For any $d \ge 10^9$ and positive integers $M$ and $t,$ there is a graph $G$ on $M$ vertices such that 
    \begin{enumerate}[label=\alph*)]
        \item \label{edge-distribution} For all disjoint subsets $S, T \subseteq V(G)$ with $ |S|, |T| \ge \frac{M}{d^{1/3}}$, we have 
        
        \[ \big|e_G(S, T) - \frac{d}{M}|S||T|\big| \le \frac{1}{2} \frac{d}{M}|S||T|, \] 
        and 
        \item \label{max-degree} the maximum degree of $G$ is at most $2d$.
    \end{enumerate}
\end{lemma}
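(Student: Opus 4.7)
The plan is a direct probabilistic argument: set $p = d/M$, sample $G \sim \cG(M, p)$, and show that both properties hold simultaneously with positive probability.

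For property (a), fix disjoint sets $S, T \subseteq V(G)$ with $|S|, |T| \ge M/d^{1/3}$. The count $e_G(S, T)$ is a sum of $|S||T|$ independent $\mathrm{Bernoulli}(p)$ random variables with mean $\mu := (d/M)|S||T| \ge M d^{1/3}$. The multiplicative Chernoff bound gives
\[
\Pr\!\left[\bigl|e_G(S,T)-\mu\bigr| > \mu/2\right] \le 2 e^{-\mu/12} \le 2 e^{-M d^{1/3}/12}.
\]
Since each vertex lies in $S$, in $T$, or in neither, there are at most $3^M$ ordered pairs $(S, T)$ of disjoint subsets. A union bound shows that (a) fails with probability at most $2 \cdot 3^M e^{-M d^{1/3}/12}$; since $d \ge 10^9$ forces $d^{1/3}/12 > \log 3 + 1$, this is bounded by $2 e^{-M}$, which is at most $1/4$ in the interesting regime.

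For property (b), each $\deg_G(v)$ is $\mathrm{Binomial}(M-1, p)$ with mean at most $d$, so the Chernoff upper tail yields $\Pr[\deg_G(v) > 2d] \le (e/4)^d \le e^{-d/4}$. A union bound over the $M$ vertices shows this property fails with probability at most $M e^{-d/4}$. Combining with the edge-distribution estimate, both properties hold simultaneously with probability at least $1/2$ whenever $M \le \tfrac14 e^{d/4}$, producing the required graph in this regime.

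The main obstacle is extending the argument to $M$ much larger than $e^{d/4}$, where the naive union bound for (b) is too weak. I would handle this via a \emph{trimming} step: sample $G \sim \cG(M, p)$, let $B = \{v : \deg_G(v) > 2d\}$, and form $\tilde G$ by deleting arbitrary edges incident to each vertex of $B$ until every degree is at most $2d$. Markov's inequality applied to $|B|$ gives $|B| \le 2 M e^{-d/4}$ with probability at least $1/2$, which for $d \ge 10^9$ is far below $M/d^{1/3}$. After verifying by another Chernoff bound that no degree in $G$ initially exceeds, say, $10d$, the total number of edges removed is at most $10d \cdot |B|$, which is negligible compared with the slack $\tfrac12 (d/M)|S||T| \ge \tfrac12 M d^{1/3}$ available in (a). Hence the edge-distribution estimate survives trimming with room to spare, and $\tilde G$ satisfies both properties.
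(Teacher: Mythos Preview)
Your overall strategy matches the paper's: sample $G(M,d/M)$, use Chernoff plus a union bound for the edge-distribution property, and trim high-degree vertices to enforce the degree cap. The first part of your argument is fine and essentially identical to the paper's.

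The gap is in your trimming step for large $M$. You bound the number of removed edges by $10d\cdot|B|$, which requires the auxiliary claim that no initial degree exceeds $10d$. But this claim cannot be established ``by another Chernoff bound'' when $M$ is large: the failure probability for a single vertex is $e^{-\Theta(d)}$, so the union bound over $M$ vertices gives $Me^{-\Theta(d)}$, which is useless once $M\gg e^{Cd}$. Since the lemma must hold for every pair $(d,M)$ (and in the paper's application $d$ is a constant while $M$ grows), this regime is genuinely relevant. In fact, for such $M$ the random graph will typically contain vertices of degree far larger than any fixed multiple of $d$.

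The fix, which is exactly what the paper does, is to bypass the maximum-degree bound entirely and control the \emph{expected number of removed edges} directly: an edge $uv$ is removed only if one endpoint has degree exceeding $2d$, and
\[
\E[\text{edges removed}]\le \binom{M}{2}\frac{d}{M}\cdot 2\Pr[\mathrm{Bin}(M-2,d/M)\ge 2d]\le 2Md\,e^{-d/3}<M.
\]
Markov's inequality then gives at most $4M$ removed edges with probability at least $3/4$, and $4M\le \tfrac14\cdot\tfrac{d}{M}(M/d^{1/3})^2$ is easily absorbed into the slack in property~(a). Your Markov bound on $|B|$ is not wrong, but pairing it with a maximum-degree bound is the step that does not go through; replacing that product by a direct first-moment bound on the removed edges closes the argument. (This also makes your case split on $M$ unnecessary.)
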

\begin{proof}
    Let $H \sim G(M, d/M),$ that is, $H$ is a random graph on $M$ vertices where every possible edge is present with probability $d/M$ independently. Let $G$ be the graph obtained from $H$ by removing all edges incident to vertices of degree greater than $2d$. Thus, $G$ satisfies \ref{max-degree} deterministically. The expected number of edges removed from $H$ to obtain $G$ is at most
    \begin{align*} \sum_{\{u,v\}\subseteq V(H)} &\Pr[uv \in E(H)] \cdot \Pr[\max\{d_H(u), d_H(v)\} > 2d \, \vert \, uv \in E(H)]\\ 
    &\le \binom{M}{2} \frac{d}{M} \cdot 2 \Pr[\Bin(M-2, d/M) \ge 2d] \le 2Md e^{-d/3} < M,
    \end{align*}
    where we used a standard Chernoff bound (e.g.~Corollary~2.3~in~\cite{janson2011random}) and that $d \ge 10^9.$ By Markov's inequality, with probability at least $3/4,$ we have $e(H) - e(G) \le 4M.$ For fixed disjoint sets of vertices $S$ and $T$ of size at least $\frac{M}{d^{1/3}}$ using the same form of the Chernoff bound, we have
    \begin{align*}  
        \Pr\left[ \big|e_H(S, T) - \frac{d}{M}|S||T|\big| > \frac{1}{4} \frac{d}{M} |S||T| \right] \le 2e^{-\frac{d}{M} |S||T| / 48} \le 2e^{-Md^{1/3} / 48} < 2e^{-20M}.
    \end{align*}
    Taking a union bound over all sets $S, T$ as above, with probability at least  $1 - 2^M \cdot 2^M \cdot 2e^{-20M} > 3/4$, we have 
    \begin{equation} \label{eq:H-edge-distr}
        \big|e_H(S, T) - \frac{d}{M}|S||T|\big| \le \frac{1}{4} \frac{d}{M} |S||T|, \, \forall S, T \subseteq V(H), S \cap T = \emptyset, |S|, |T| \ge \frac{M}{d^{1/3}}.
    \end{equation}
    By a union bound, with probability at least $1/2$ we have $e(H) - e(G) \leq 4M$ and \eqref{eq:H-edge-distr}. Noting that $4M \le \frac{1}{4} \frac{d}{M} \left( \frac{M}{d^{1/3}} \right)^2,$ it follows that $G$ satisfies \ref{edge-distribution} with probability at least $1/2,$ finishing the proof.
\end{proof}

\begin{proof}[Proof of Theorem~\ref{thm:lower-bound}]
 Fix $k \ge 3,$ let $n$ be a large enough integer, set $d = 10^{20k}$ and let $G$ be a  graph on $n_k = dn$  vertices satisfying \ref{edge-distribution}~and~\ref{max-degree} for $d$ whose existence is given by Lemma~\ref{lem:edge-distribution}. Let $H_k = H(G, k).$ We will show that there is no monochromatic copy of $H_k$ in $\phi^{(k)}_n.$ This would prove the theorem since, by construction, $e(H_k) \le n_k \cdot \#\{\text{paths of length } k-2 \text{ in  } G\} \le n_k^2 (2d)^{k-2} = O(n^2)$. We make repeated use of the following lemma.

    \begin{lemma} \label{lem:lower-bound}
        Let $U \subseteq V(G), |U| \ge n_k / (1000^{k-1})$ and let $\ell \ge 3$ be an integer. Denote $H = H(G[U], \ell)$ and suppose there is a monochromatic copy of $H$ in $\phi^{(\ell)}_n.$ Then, there exists a set $U' \subseteq U$ such that $|U'| \ge |U| / 1000$ and there exists a monochromatic copy of the $(\ell-1)$-uniform hypergraph $H' = H(G[U'], \ell-1)$ in $\phi^{(\ell-1)}_n.$
    \end{lemma}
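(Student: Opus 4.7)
Plan. The plan is to emulate the Erd\H{o}s--Hajnal stepping-up argument, adapted from cliques to the path-based hypergraph $H$. Given the monochromatic embedding $\Psi \colon V(H) \hookrightarrow \{0,\dots,N_\ell-1\}$, write its image as $X = \{x_1 < \cdots < x_{|U|}\}$ and let $\pi \colon U \to [|U|]$ be the rank map with $\Psi(u) = x_{\pi(u)}$. Set $D_i \coloneqq \delta(x_i, x_{i+1})$ for $i \in [|U|-1]$. By Property~\ref{prop:maximum}, every edge $e$ of $H$ whose vertices have $\pi$-sorted indices $j_1 < \cdots < j_\ell$ contributes a $\delta^e$-vector satisfying $\delta^e_r = \max_{j_r \le t < j_{r+1}} D_t$, so the color of $e$ in $\phi^{(\ell)}_n$ is determined entirely by the behaviour of $D$ on the $\ell-1$ intervals induced by $e$.

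I would then split on the color $C$ of the copy. If $C$ is inherited from a color of $\phi^{(\ell-1)}_n$, every $\delta^e$ is monotone and $\phi^{(\ell-1)}_n(\{\delta^e_r\}) = C$; if $C \in \{C_1,C_2\}$ (possible only for $\ell \ge 4$), every $\delta^e$ is non-monotone with a prescribed $\argmax$ position. The central combinatorial task, in each case, is to extract from $D$ a monotone subsequence $(D_{i_1},\dots,D_{i_M})$ of length $M \ge |U|/1000$ that is \emph{realizable} in the following sense: after setting $U' = \{\pi^{-1}(i_s) : s \in [M]\}$ and defining $\Psi'(\pi^{-1}(i_s)) \coloneqq D_{i_s}$, for every edge $e'$ of $H(G[U'], \ell-1)$ with $\pi$-sorted indices $i_{s_1} < \cdots < i_{s_{\ell-1}}$, the set $\Psi'(e') = \{D_{i_{s_1}},\dots,D_{i_{s_{\ell-1}}}\}$ appears as the $\delta^e$-vector of a genuine edge $e \in H(G[U], \ell)$ extending $e'$ by a single auxiliary vertex (either lengthening the path of $e'$ in $G[U]$ or playing the role of the extra off-path vertex).

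Granted this realizability, monochromaticity transfers directly: each $\Psi'(e')$ inherits the $\phi^{(\ell-1)}_n$-color of the corresponding $e \in H(G[U], \ell)$, which by the definition of $\phi^{(\ell)}_n$ and the type of $C$ is forced to be a single color $C'$ (equal to $C$ in the inherited case, and the corresponding $\phi^{(\ell-1)}$-color dictated by the $\argmax$ structure in the $C_1/C_2$ cases). The existence of the auxiliary path-extending vertex for each $e'$ is where the expander properties of $G$ from Lemma~\ref{lem:edge-distribution} enter: the edge-distribution and max-degree bounds guarantee that paths of length $\ell-1$ in $G[U']$ can almost always be extended inside $G[U]$ to produce the required edge of $H(G[U], \ell)$, and moreover that $|U'| \ge |U|/1000$ suffices to preserve enough path structure on $G[U']$ to form $H(G[U'], \ell-1)$ in the first place.

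The main obstacle is extracting the long monotone subsequence with this realizability guarantee. A naive Erd\H{o}s--Szekeres argument gives only length $\Omega(\sqrt{|U|})$, which is exponentially too weak. The argument must instead exploit simultaneously (i) the rigidity of $D$ imposed by the many monochromatic $\delta^e$-constraints across all edges of $H$; (ii) the richness of $H(G[U], \ell)$, which despite its sparsity contains every path of length $\ell-1$ in $G[U]$ combined with every non-path vertex, so the set of $\pi$-orderings realized by edges is large and diverse; and (iii) the quasi-random edge distribution and max-degree bound for $G[U]$ from Lemma~\ref{lem:edge-distribution}, which provide abundant path extensions on large subsets. Carrying out the extraction and verifying the extension step across the three color cases is the bulk of the work, with the $C_1/C_2$ cases likely requiring somewhat different averaging/pigeonholing than the inherited case but following the same overall template.
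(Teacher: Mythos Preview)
Your setup is correct and you have identified the right obstacle: extracting a linear-length monotone run from the $\delta$-sequence is the heart of the matter, and Erd\H{o}s--Szekeres is useless here. But the proposal stops precisely at that point. You say the extraction ``must exploit'' the rigidity of the constraints, the richness of $H(G[U],\ell)$, and the expansion of $G$, and that the $C_1/C_2$ cases will follow a ``similar template with different pigeonholing'', yet no mechanism is offered. This is the entire content of the lemma, and the paper's mechanism is not one your plan points toward.

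The paper does \emph{not} split on the colour $C$, and it does not try to read a monotone subsequence off the constraints directly. Instead it runs a bit-splitting (dyadic) procedure on $X=\Psi(U)$: repeatedly partition on the highest disagreeing bit and pass to the larger half. A first pass isolates an interval $Z^*$ of size $\asymp s$ with a large complement $W$ on one side; expansion then gives $B\subseteq\Psi^{-1}(Z^*)$, $|B|\ge s/8$, every vertex of which has a $G$-neighbour in $A=\Psi^{-1}(W)$. A second bit-splitting is run on $\Psi(B)$. If it lasts $\ge s/100$ rounds, the discarded halves (all on one side by pigeonhole) form the desired monotone set $Y$. If it halts early, one uses expansion again to find a path of length $\ell-2$ in $G$ straddling a split $S_i^{p_i}\mid S_i^{1-p_i}$ in a specific way, and from this one manufactures two edges $e,f\in E(H)$ whose $\delta$-vectors have different shapes, hence different $\phi^{(\ell)}_n$-colours---contradicting monochromaticity. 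So the long monotone run is obtained by contradiction, not by positive extraction, and the colour split you propose never arises: once $Y$ exists, every extended edge $f=e'\cup\{w\}$ has monotone $\delta(\Psi(f))$, so its $\phi^{(\ell)}_n$-colour is automatically $\phi^{(\ell-1)}_n(\Psi'(e'))$, regardless of whether $C$ was ``inherited'' or equal to $C_1,C_2$.

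A smaller point: your construction $U'=\{\pi^{-1}(i_s):s\in[M]\}$ with $\Psi'(\pi^{-1}(i_s))=D_{i_s}$ does not by itself guarantee realizability. The paper takes $U'$ to be only (roughly) the second half of $\Psi^{-1}(Y)$, restricted to vertices with a $G$-neighbour in the first half $L$; the auxiliary vertex $w$ always comes from $L$ and is therefore smaller than every $z_j\in\Psi(U')$, which is what forces $\delta(\Psi(e'\cup\{w\}))=(a_{j_1},\dots,a_{j_{\ell-1}})$ exactly. Without reserving such a one-sided pool of extenders, you cannot control the first coordinate of $\delta(\Psi(f))$.
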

    
    First, let us finish the proof of Theorem~\ref{thm:lower-bound} given Lemma~\ref{lem:lower-bound}. By repeated uses of the lemma, it follows that there are subsets $V(G) = U_k \supseteq U_{k-1} \supseteq \dots \supseteq U_2$ such that there is a monochromatic copy of $H(G[U_\ell], \ell)$ in $\phi^{(\ell)}_n$ for all $2 \le \ell \le k$ and $|U_\ell| \ge |U_{\ell+1}| / 1000$ for all $2 \le \ell \le k-1.$ Hence, we have $|U_2| \ge n_k / 1000^k = 10^{20k} n / 1000^k > n$ and a monochromatic copy of $H(G[U_2], 2)$ in $\phi^{(2)}_n.$ Recall that by definition, $H(G[U_2], 2)$ is a clique on $|U_2| > n$ vertices, hence there is no monochromatic copy of $H(G[U_2], 2)$ in $\phi^{(2)}_n,$ a contradiction.
\end{proof}

\emph{Proof of Lemma~\ref{lem:lower-bound}.}
    Let $s = |U| = |V(H)|,$ let $X = \{x_1, \dots, x_s\} \subseteq \{0, \dots, N_\ell - 1\},$ where $x_1 < \dots < x_s$, form a mononchromatic copy of $H$ and denote by $\Psi \colon V(H) \rightarrow \{ 0, \dots, N_\ell-1 \}$ the given monochromatic embedding.
    
    \begin{claim} \label{claim:Y}
        There is a set $Y = \{y_1, \dots, y_t\} \subseteq X$ of size $t \ge s / 200$ where $y_1 < \dots < y_t$ such that $\delta(\{y_1, \dots, y_t\})$ is a monotone sequence.
    \end{claim}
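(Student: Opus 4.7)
The plan is to analyze the sequence $\delta_j := \delta(x_j, x_{j+1})$ for $j \in [s-1]$. By Property P2 applied to consecutive triples $(x_j, x_{j+1}, x_{j+2})$, we have $\delta_j \neq \delta_{j+1}$ for all $j \in [s-2]$, so strict monotonicity in this sequence is well-defined.

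I would construct $Y$ using the left-to-right (LTR) maxima of the sequence $\delta_1, \ldots, \delta_{s-1}$. Let $\ell_1 = 1 < \ell_2 < \cdots < \ell_L$ be the LTR-max positions, i.e., those indices $\ell$ with $\delta_\ell > \delta_a$ for every $a < \ell$, and set $Y := \{x_1\} \cup \{x_{\ell_i + 1} : i \in [L]\}$. Using Property P3 together with the LTR-max property, one checks that $\delta(Y)$ equals the strictly increasing sequence $(\delta_{\ell_1}, \ldots, \delta_{\ell_L})$, and $|Y| = L+1$. Symmetrically, using right-to-left (RTL) maxima yields a set $Y'$ with $\delta(Y')$ strictly decreasing and $|Y'| = R+1$, where $R$ is the number of RTL-max positions.

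It therefore suffices to show $\max(L, R) \ge s/200 - 1$, for which I would use the monochromatic property of $X$ together with the edge-distribution property of $G$. In the case $\ell = 3$, the color of the copy forces, for every edge $\{x_i, x_j, x_k\}$ of the copy, the pair $(\max_{[i,j-1]}\delta, \max_{[j,k-1]}\delta)$ to be strictly monotone in a fixed direction; WLOG increasing. Then for each edge $\{u,v\} \in E(G[U])$ with $\pi(u) = a < b = \pi(v)$ (where $\pi$ is the permutation induced by $\Psi$), considering the triples $\{x_1, x_a, x_b\}$ and $\{x_a, x_b, x_s\}$, whenever these are edges of $H$, yields strict inequalities among $\max_{[1,a-1]}\delta$, $\max_{[a,b-1]}\delta$, and $\max_{[b,s-1]}\delta$, each of which forces a new LTR-max position inside the corresponding interval. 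A double-counting using the fact that $G[U]$ contains $\Omega(ds)$ edges with large $\pi$-span, guaranteed by Lemma~\ref{lem:edge-distribution}, then produces $L \ge s/200 - 1$ (or symmetrically $R \ge s/200 - 1$).

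For $\ell \ge 4$, the same strategy applies with a case split according to whether the color of the copy corresponds to the monotone or the non-monotone branch of the coloring rule; in the non-monotone cases $C_1, C_2$ one analyzes the edges via the position of $\arg\max$ in their $\delta$-vectors and a symmetric argument. The main obstacle is the detailed counting to obtain the linear bound $\max(L, R) \ge s/200 - 1$: one must quantify how the edges of $G[U]$, through their $\pi$-spans, distribute LTR (or RTL) max positions across the $\delta$-sequence without excessive overlap, exploiting the fact that by the edge-distribution hypothesis many edges straddle any fixed position of $[1,s-1]$.
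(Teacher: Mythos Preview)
Your construction of $Y$ from left-to-right (or right-to-left) maxima is correct and standard: the resulting $\delta(Y)$ is indeed strictly monotone. The whole content of the claim, however, is the lower bound $\max(L,R)\ge s/200$, and here your proposal diverges substantially from the paper and contains genuine gaps.

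For $\ell=3$ your idea can be pushed through, though not by the vague ``double-counting over $\pi$-spans'' you describe. The clean argument is this: in the increasing case, for any $G$-edge at positions $a<b$ with $2\le b\le s-1$, plugging $c=1$ and $c=b+1$ into the triple constraint gives $\max_{[1,b-1]}\delta<\delta_b$, so $b$ itself is an LTR maximum. Hence every right endpoint of a $G[U]$-edge (in the $\Psi$-order) is an LTR maximum, and the edge-distribution property forces at least $s/2-n_k/d^{1/3}$ such positions in $[s/2{+}1,s]$. You did not supply this step, but it exists.

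For $\ell\ge 4$ the situation is different, and ``the same strategy applies'' is not justified. The non-monotone colours $C_1,C_2$ impose a constraint on the \emph{location of the argmax} inside each edge's $\delta$-vector, not a monotonicity constraint; there is no evident mechanism by which this pins down LTR or RTL maxima of the global sequence $(\delta_1,\dots,\delta_{s-1})$. Likewise, when the colour comes from the recursive branch, each edge has a monotone $\delta$-vector but the direction is not determined by the colour alone, so you cannot fix ``increasing'' versus ``decreasing'' uniformly. Your own closing paragraph concedes that the ``detailed counting'' is the main obstacle; for $\ell\ge 4$ it is more than a counting issue---the structural link you need between the colouring rule and LTR/RTL records is simply not established.

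The paper's proof takes a completely different route. It runs two binary-splitting procedures on $X$ (via the highest differing bit). The first isolates an interval $Z^*$ of size between $s/4$ and $s/2$; the second, applied to the subset of $Z^*$ whose preimages have a $G$-neighbour on the other side, either runs for $\ge s/100$ steps---in which case one harvests $Y$ by picking one element from the minority side at each step, with $\delta(Y)$ monotone by construction---or it terminates early, in which case one exhibits two explicit edges of $H$ receiving different colours under $\phi_n^{(\ell)}$, contradicting monochromaticity. The contradiction is engineered precisely so that one edge has its argmax at an interior position (colour $C_2$) and the other at an endpoint (colour $C_1$); this is where the structure of $H(G,\ell)$---a path in $G$ plus one free vertex---is used in an essential way that your LTR/RTL approach does not exploit.
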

    
    First we finish the proof of the Lemma given Claim~\ref{claim:Y}. Let $Y \subseteq X$ with $|Y| = t \ge s / 200$ be given by Claim~\ref{claim:Y} and assume that $\delta(Y)$ is increasing, the other case being analogous. Let $L = \{\Psi^{-1}(y_1), \dots, \Psi^{-1}(y_{t/2})\} \subseteq U$ and let $U' \subseteq U$ be the set of all vertices $\Psi^{-1}(y_j)$ with $t/2 < j \le t$ which have at least one neighbour in $L$. We have that $|U'| \ge t / 4,$ as otherwise there is a set of $t/4$ vertices with no edges toward $L,$ contradicting \ref{edge-distribution} since 
    \[ |L| \ge t/4 \ge s / 800 \ge n_k / (1000^k) > n_k / (10^{20k / 3}) = n/d^{1/3}. \]
    
    Let us verify that $\phi_n^{(\ell-1)}$ contains a monochromatic copy of $H' = H(G[U'], \ell-1).$ Let $z_1, z_2, \dots, z_{t'}$ be the elements of $\Psi(U')$ in increasing order and recall that $y_i < z_j$ for all $i \in [t/2], j \in [t'].$ Denote $a_1 = \delta(y_1, z_1)$ and $a_i = \delta(z_{i-1}, z_i)$ for $2 \le i \le t'.$ We will show that the set $\cA = \{a_1, \dots, a_{t'}\}$ forms a monochromatic copy of $H'$ in $\phi^{(\ell-1)}_n$ with the natural correspondence $\Psi' \colon U' \rightarrow \cA$ defined by $\Psi'(\Psi^{-1}(z_i)) = a_i$ for all $i \in [t']$. We do so by showing that, for an edge $e \in E(H'),$ the color $\phi^{(\ell-1)}_n(\Psi'(e))$ is inherited from the color of $\phi_n^{(\ell)}(\Psi(f))$ of some edge $f \in E(H).$
    
    By monotonicity of $\delta(\{y_1, \dots, y_t\}),$ using \ref{prop:maximum}, we have $\delta(z_i, z_j) = a_j$ for any $1 \le i < j \le t'$ and $\delta(y_i, z_j) = a_j$ for any $i \in [t/2]$ and $j \in [t'].$ Now, consider an arbitrary edge $e = \{\Psi^{-1}(z_{j_1}), \dots, \Psi^{-1}(z_{j_{\ell-1}})\} \in E(H'),$ where $j_1 < j_2 < \dots < j_{\ell-1}.$ By construction, some $\ell-2$ of these vertices form a path $P'$ in $G$. By definition of $U',$ any vertex on this path, in particular one of its endpoints, has a neighbour $L$. So, we can attach a vertex $w \in L$ to one of the endpoints of $P'$ to obtain a path on $\ell-1$ vertices in $G$. Hence, $f = e \cup \{w\}$ is a set of $\ell$ vertices, some $\ell-1$ of which form a path in $G$, implying that $f$ is an edge of $H$. Note that $\delta(\Psi(f)) = (a_{j_1}, a_{j_2}, \dots, a_{j_{\ell-1}}),$ which is an increasing sequence. If $\ell \ge 4,$ we have $\phi^{(\ell)}_n(\Psi(f)) = \phi^{(\ell-1)}_n(\delta(\Psi(f))) = \phi^{(\ell-1)}_n(\Psi'(e)).$ In the case $\ell = 3,$ if $\phi^{(2)}_n(\Psi'(e)) = \mathrm{red},$ then $\phi^{(\ell)}_n = C_1$, and if $\phi^{(2)}_n(\Psi'(e)) = \mathrm{blue}$, then $\phi^{(\ell)}_n = C_2$. In either case, it follows that $\cA$ forms a monochromatic copy of $H'$ in $\phi_n^{(\ell-1)},$ as needed. \qed

    \begin{proof}[Proof of Claim~\ref{claim:Y}]
    Consider the following procedure. Start with $Z = X.$ At each step, let $q$ be the largest integer such that not all elements of $Z$ have the same bit at position $q.$ Consider the partition $Z = Z_0 \cupdot Z_1,$ where $Z_p$ denotes the set of elements $z \in Z$ with $\bit(z, q) = p.$ Then, let $Z$ be the larger of $Z_0, Z_1$ and continue the procedure. Eventually we reach a point where $s/4 \le |Z| \le s/2,$ where the lower bound follows since $|Z|$ drops by a factor of at most $2$ in each step. Let $Z^*$ denote the final set $Z$ and let $q^*$ be the last value of $q$ before this point. Then, for all distinct $u, v \in Z^*, w \in X \setminus Z^*,$ we have $\delta(u, v) < q^* \le \delta(u, w).$ Also, note that the elements of $Z^*$ form an interval in the ordered set $X.$ Indeed, at each step all elements in $Z_0$ are smaller than all elements of $Z_1,$ since all elements in $Z$ have the same bit on all positions larger than $q.$ Hence, if $Z$ was an interval in the ordered set $X$ before step $i,$ it is also an interval after step $i$. We shall assume that at least $s/4$ vertices in $X \setminus Z^*$ are smaller than all elements of $Z^*,$ the other case being analogous. Let $W$ denote the set of elements in $X \setminus Z^*$ smaller than every element of $Z^*.$ Now, let $A = \Psi^{-1}(W) \subseteq U$ and let $B$ be the set of all vertices in $\Psi^{-1}(Z^*) \subseteq U$ that have at least one neighbour in $A.$ By \ref{edge-distribution}, it follows that
    \[ |B| \ge |\Psi^{-1}(Z^*)| / 2 \ge s / 8, \]
    as otherwise we obtain a set of $s/8$ vertices with no edge towards $A,$ which is a contradiction since $|A| \ge s/8 \ge n_k / 1000^k > n_k / d^{1/3}.$
    
    Now, we analyse the set $S_1 \coloneqq \Psi(B)$ using a similar procedure as above in steps $i=1, \dots$ At the beginning of step $i,$ we have a set $S_i$ of size at least $2.$ Let $q_i$ be the largest integer such that not all elements of $S_i$ have the same bit at position $q_i.$ Let $S_i = S^0_i \cupdot S^1_i,$ where $S^p_i$ consists of the elements $z \in S_i$ with $\bit(z, q_i) = p.$ Let $p_i \in \{0,1\}$ be such that $|S^{p_i}_i| \ge |S^{1-p_i}_i|.$ Let $S_{i+1} = S^{p_i}_i.$ If $|S_{i+1}| < s / 100,$ stop the process, otherwise continue to step $i+1.$
    
    Assume first that the procedure runs for at least $s / 100$ steps. Then, there is a set $I, |I| \ge s / 200$ and $p \in \{0, 1\}$ such that for all $i \in I,$ we have $p_i = p$ and so $S_{i+1} = S^p_i.$ For each $i \in I,$ let $y_i$ be an arbitrary element in $S^{1-p}_i$ and let $Y = \{y_i, \vert \, i \in I\}.$ Using \ref{prop:maximum}, we have $\delta(y_i, y_{i'}) = q_i$ for any $i, i' \in I, i<i'.$ Moreover, the sequence $(y_i)_{i\in I}$ is increasing if $p=1,$ and decreasing otherwise. Observing that $q_i > q_{i'}$ for $i < i',$ it follows that $Y$ is the desired set.
    
    Therefore, we may assume that the above procedure runs for $h < s / 100$ steps and we will show that this leads to a contradiction. First, we require the following claim.
    \begin{claim} \label{cl:index-i}
        There exists $i \in [h]$ such that $|S^{1-p_i}_i| \ge 2$ and there is a path $P$ of length $\ell-2$ in $G$ with an endpoint $v \in \Psi^{-1}(S^{1-p_i}_i)$ and having its remaining vertices in $\Psi^{-1}(S^{p_i}_i).$
    \end{claim}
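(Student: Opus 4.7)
The plan is to leverage the expander property of $G$ (Lemma~\ref{lem:edge-distribution}) in two phases: first, finding an initial edge from a ``low-level'' vertex to a ``high-level'' vertex; second, extending this edge to a path of length $\ell - 2$ within the high-level region. Throughout, I write $W_i := \Psi^{-1}(S_i^{1-p_i})$ and $T := \{i \in [h] : |W_i| \ge 2\}$, and I use the quantitative fact that $\sum_{i \in [h]} |W_i| = |S_1| - |S_{h+1}| \ge s/8 - s/100 \ge s/10$. Since indices outside $T$ contribute at most $h < s/100$, the set $W^* := \bigcup_{i \in T} W_i$ has $|W^*| \ge 9s/100$. Also, $|\Psi^{-1}(S_{i+1})| \ge |S_i|/2 \ge s/200$ for each $i \le h$, since $S_{i+1}$ is defined as the larger half of $S_i$ and $|S_i| \ge s/100$. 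A direct calculation using $s \ge n_k/1000^{k-1}$ and $d = 10^{20k}$ shows that $s/200$ and $9s/100$ both exceed the threshold $n_k/d^{1/3}$ of Lemma~\ref{lem:edge-distribution}\ref{edge-distribution}.

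For the initial edge, I would split $W^*$ by choosing a threshold index $i_0 \in T$: if some $W_{i_0}$ already has size at least $n_k/d^{1/3}$, take $L := W_{i_0}$; otherwise, greedily accumulate $L := \bigcup_{i \in T, i \le i_0} W_i$ with $i_0$ minimal so that $|L| \ge n_k/d^{1/3}$, which succeeds because $|W^*| \gg n_k/d^{1/3}$ and each $|W_i| < n_k/d^{1/3}$ in this subcase. Let $R := \Psi^{-1}(S_{i_0+1})$, which has size $\ge s/200 \ge n_k/d^{1/3}$. Lemma~\ref{lem:edge-distribution}\ref{edge-distribution} then yields an edge $\{v, w_1\}$ with $v \in W_{i'}$ for some $i' \in T, i' \le i_0$, and $w_1 \in R \subseteq \Psi^{-1}(S_{i'+1})$. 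For $\ell = 3$ this already provides the required path.

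For $\ell \ge 4$, I would extend $w_1$ to a path of length $\ell - 3$ inside $V' := \Psi^{-1}(S_{i'+1})$, producing the full path $v, w_1, w_2, \ldots, w_{\ell-2}$. Since $|V'| \ge s/200$, the induced subgraph $G[V']$ has average degree $\bar{d} := d|V'|/n_k$, which is enormous compared to $\ell \le k$ by the choice $d = 10^{20k}$. Iteratively removing vertices of degree below $\bar{d}/2$ from $G[V']$ yields a subgraph of minimum degree at least $\bar{d}/2 \gg \ell$, in which every vertex extends to a simple path of length $\ell - 3$ by picking an unvisited neighbour at each step. To force $w_1$ to lie in this dense subgraph, I would repeat the edge-finding step with $R$ replaced by the vertex set of the high-min-degree subgraph of $G[V']$.

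The main obstacle is confirming that this high-min-degree subgraph itself has size at least $n_k/d^{1/3}$ so that Lemma~\ref{lem:edge-distribution} still applies in the second application. This is a careful quantitative check that relies on the aggressively large $d = 10^{20k}$, which forces the internal density $d|V'|/n_k$ to dominate $n_k/d^{1/3}$ by a wide margin; if this direct bound proves insufficient for very large $n$, one can supplement it with classical extremal tools (such as Erdős--Gallai-type bounds for paths in dense graphs) to show that almost every vertex of $V'$ is the starting endpoint of a simple path of length $\ell - 3$ in $G[V']$, and then apply the expander property to intersect this set with the edges from $L$.
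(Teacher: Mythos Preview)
Your two-phase strategy --- use the expander property to find an edge from the union of discarded halves into a deep level of the nested sequence, then extend to a path via a minimum-degree core --- is exactly the paper's strategy. But the execution has a circularity that, as written, breaks the argument. You set $V' := \Psi^{-1}(S_{i'+1})$, where $i'$ is the index produced by the \emph{first} edge you find, and then propose to repeat the edge-finding against the high-minimum-degree core of $G[V']$. In the accumulation case this second application may return $v \in W_{i''}$ with $i'' > i'$, and then the path you build inside $V' = \Psi^{-1}(S_{i'+1})$ need not lie in $\Psi^{-1}(S_{i''}^{p_{i''}}) = \Psi^{-1}(S_{i''+1})$, which is what the claim demands. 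Moreover, in that same case the pieces $W_j$ with $i' < j \le i_0$ sit inside $V'$, so $L$ and your new target set are not disjoint and Lemma~\ref{lem:edge-distribution}\ref{edge-distribution} does not directly apply.

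The paper sidesteps all of this with one observation: take the target set to be the \emph{deepest} level $T_0 := \Psi^{-1}(S_{h+1})$ from the outset. Since $S_{h+1} \subseteq S_i^{p_i}$ for every $i \in [h]$, any path built inside (a subset of) $T_0$ automatically lies in the correct $\Psi^{-1}(S_i^{p_i})$ no matter which $W_i$ the endpoint $v$ happens to fall into --- so no threshold $i_0$, no first pass, and no iteration are needed, and $Q = W^*$ is disjoint from $T_0$ by construction. The paper also trims $T_0$ only to minimum degree $\ell$ rather than $\bar d/2$: if half the vertices were removed, the deleted and surviving halves would each have size at least $s/400 \ge n_k/d^{1/3}$ but at most $\ell \cdot s/400$ edges between them, contradicting \ref{edge-distribution}. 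This makes the size of the core immediate and dissolves the quantitative concern in your final paragraph.
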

        \begin{proof}[Proof of Claim~\ref{cl:index-i}]
         Suppose the claim is not true and let $Q = \bigcup_{i \in [h], |S^{1-p_i}_i| \ge 2} \Psi^{-1}(S^{1-p_i}_i)$ and $T_0 = \Psi^{-1}(S_{h+1}),$ where $S_{h+1}$ is the final set after halting the procedure. Note that $|T_0| \ge s / 200.$ We repeatedly remove from $T_0$ vertices that have fewer than $\ell$ neighbours in $G$ in the current set $T_0.$ Let $T$ denote the final set after these deletions. Then, $|T| \ge s / 400 \ge n_k / (1000)^k,$ as otherwise at the point when we removed half of the vertices, we have two sets of size $q \ge s/400 \ge n / d^{1/3}$ with at most $\ell q$ edges between them, contradicting that $G$ satisfies \ref{edge-distribution}. Using \ref{edge-distribution} again, it follows that there is an edge $vu$ with $v \in Q$ and $u \in T$ since $|Q| \ge |S_1| - |S_{h+1}| - h \ge s/8 - s/100 - s/100 > s/16 >  n / d^{1/3}.$ Since $G[T]$ has minimum degree at least $\ell,$ we can extend this edge to a path of length $\ell-2$ using only vertices in $T.$ Let $i$ be the index such that $v \in \Psi^{-1}(S^{1-p_i}_i).$ Note that $S_j^{p_j} \subseteq S_{j-1}^{p_{j-1}}$ for all $2 \le j \le h$ and $T \subseteq \Psi^{-1}(S_{h+1}) = \Psi^{-1}(S_h^{p_h}).$ It follows that $T \subseteq \Psi^{-1}(S_i^{p_i}),$ so the abovementioned path indeed has all vertices but the first in $\Psi^{-1}(S_i^{p_i}).$ By definition of $Q,$ we also have $|S^{1-p_i}_i| \ge 2,$ as needed.
    \end{proof}
    
    Let $i, v, P$ be given by Claim~\ref{cl:index-i} and let $w$ be an arbitrary vertex in $\Psi^{-1}(S^{1-p_i}_i)$ distinct from $v$. We will show that then $\Psi$ is not a valid embedding, that is, we will find two edges of $H$ whose images get different colors. Let $e = P \cup \{w\} \in E(H).$ We now find another edge $f \in E(H)$ whose image under $\Psi$ gets a different color than $e.$ 
    
    Consider first the case $\ell = 3.$ Then, the path $P$ consists of a single edge $vu$ for some $u \in \Psi^{-1}(S^{p_i}_i).$ Let $u'$ be an arbitrary vertex in $S^{p_i}_i$ distinct form $u,$ which clearly exists since $|S^{p_i}_i| \ge |S^{1-p_i}_i| \ge 2,$ and let $f = \{ v,u,u' \}.$ Note that, by construction, $\delta(u, u'), \delta(v, w) < q_i,$ while $\delta(z, z') = q_i$ for any $(z, z') \in S_i^{p_i} \times S_i^{1-p_i}$. It follows that if $p_i = 1,$ then $\delta(\Psi(e))$ is increasing, while $\delta(\Psi(f))$ is decreasing whereas if $p_i = 0,$ then $\delta(\Psi(e))$ is decreasing, while $\delta(\Psi(f))$ is increasing. In either case, $\Psi(e)$ and $\Psi(f)$ are colored differently by $\phi_n^{(3)}$, as claimed.

\begin{figure}[H]
    \centering
    \begin{subfigure}{\textwidth}
      \centering
      \includegraphics[scale=0.85]{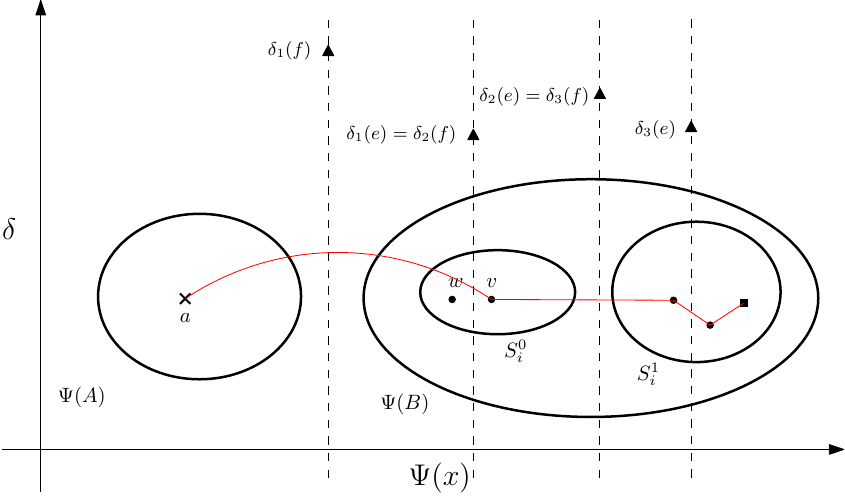}
      \caption{Case $p_i=1$}
      \label{fig:sub1}
    \end{subfigure}%
    \\
    \vspace{0.5cm}
    \begin{subfigure}{\textwidth}
      \centering
      \includegraphics[scale=0.85]{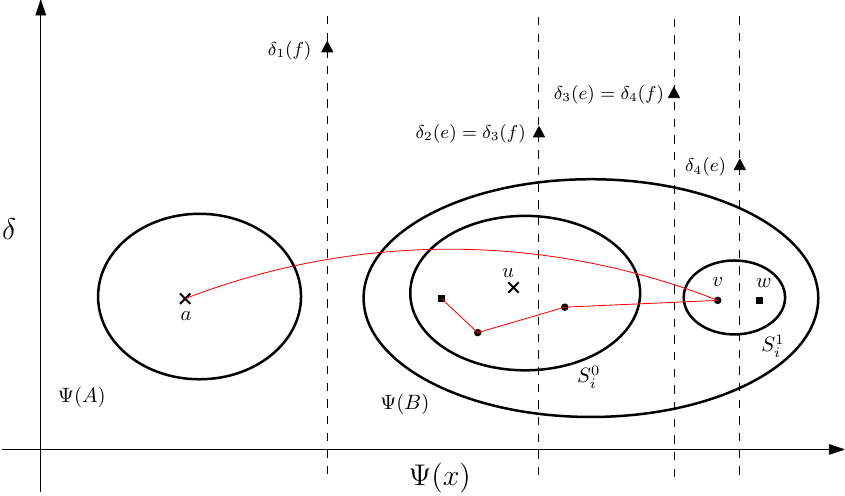}
      \caption{Case $p_i=0$}
      \label{fig:sub2}
    \end{subfigure}
    \caption{The two chosen edges in the case $\ell=5$. Sets $\Psi(A), \Psi(B), S_i^0$ and $S_i^1$ are depicted by ovals. The vertices appearing in both $e$ and $f$ are depicted by points, the vertices in $e \setminus f$ by squares and the vertices in $f \setminus e$ by crosses. The vertices further to the right are mapped by $\Psi$ to larger values. The black triangles correspond to the value of $\delta$ of consecutive vertices, where higher triangles represent larger bit positions. The red lines represent the edges of the corresponding vertices in $G$.}%
    \label{fig:edges}
\end{figure}
    
    Now, consider $\ell \ge 4$ and see Figure~\ref{fig:edges} for an illustration. Recall that $e = P \cup \{w\},$ so $e$ has exactly two vertices in $\Psi^{-1}(S^{1-p_i}_i)$ and the other vertices are in $\Psi^{-1}(S^{p_i}_i).$ Let $\delta = (\delta_1, \dots, \delta_{\ell-1}) = \delta(\Psi(e)).$ Then, $\argmax_{j \in [\ell-1]} \delta_j = 2$ if $p_i = 1$ and $\argmax_{j \in [\ell-1]} \delta_j = k-2$ if $p_i = 0.$ In either case, $\phi_n^{(\ell)}(\Psi(e)) = C_2.$ We will find an edge $f \in E(H)$ whose image receives color $C_1$. Recall that every vertex in $B \supseteq \Psi^{-1}(S^{1-p_i}_i)$ has a neighbour in $A$. Hence, we can extend $P$ by attaching a vertex $a \in A$ to $v$ and then remove its last vertex (which is in $\Psi^{-1}(S_i^{p_i})$) to obtain a path $P'$ of length $\ell-2$ whose first vertex is $a \in A,$ the second vertex is $v \in \Psi^{-1}(S^{1-p_i}_i)$ and the remaining vertices are in $\Psi^{-1}(S^{p_i}_i).$
    
    If $p_i = 1,$ then let $f = V(P') \cup \{w\} \in E(H).$ Consider $\delta = (\delta_1, \dots, \delta_{\ell-1}) = \delta(\Psi(f)).$ Since $f$ has one vertex in $A$ and the rest are in $B,$ it follows that $\argmax_{j \in [\ell-1]} \delta_j = 1.$ Additionally, $\Psi(f)$ has two vertices in $S_i^0$ and the remaining ones are in $S_i^1.$ Hence, $\delta_2 < q_i = \delta_3,$ so $\delta$ is not monotone, implying that $\phi_n^{(\ell)}(\Psi(f)) = C_1.$
    
    If $p_i = 0,$ then let $u$ be an arbitrary vertex in $\Psi^{-1}(S^0_i) \setminus V(P'),$ which exists since $|S^0_i| \ge s / 100 \ge \ell.$ Let $f = V(P') \cup \{ u\} \in E(H)$ and denote $\delta = (\delta_1, \dots, \delta_{\ell-1}) = \delta(\Psi(f)).$ As before, we have $\argmax_{j \in [\ell-1]} \delta_j = 1.$ The largest element of $\Psi(f)$ is $\Psi(v) \in S^1_i,$ while the second and third largest elements are in $S^0_i.$ Hence, $\delta_{\ell-1} > \delta_{\ell-2} < \delta_1,$ which gives $\phi_n^{(\ell)}(\Psi(f)) = C_1.$    
    \end{proof}

\section{Concluding remarks}

There are many remaining interesting problems on Ramsey numbers of hypergraphs. Maybe most notably, while for four or more colors we have lower bound constructions on hypergraph Ramsey numbers for cliques and certain other hypergraphs essentially matching the upper bounds, the bounds are still far apart for two and three colors. It would be interesting to close the gap in these cases. 

Another well-studied question is to bound the $q$-color Ramsey number of bounded degree $k$-uniform hypergraphs on $n$ vertices. It is known that there is a constant $c = c(k, q, \Delta)$ such that $r(H;q) \le c(k, q, \Delta) n$ for any $n$-vertex $k$-uniform hypergraph $H$ with maximum degree at most $\Delta$ and the main question is to understand the value of the factor $c(k, q, \Delta)$ as a function of the maximum degree. In the graph case with two colors, the best lower bound is $c(2, 2, \Delta) = 2^{\Omega(\Delta)}$ due to Graham, R\"{o}dl and Ruci\'{n}ski~\cite{graham2000graphs}, while the best upper bound is $c(2, 2, \Delta) < 2^{O(\Delta \log \Delta)}$ due to Conlon, Fox and Sudakov~\cite{conlon2012twoproblems}. For more than two colors the known upper bound proved in \cite{FS} is much worse and is of the form $c(2, q, \Delta) \le 2^{c_q \Delta^2}$. Turning to hypergraphs, \cite{conlon2009ramsey} showed $c(3, q, \Delta) \le \tw_3(c' \Delta \log \Delta)$ and $c(k, q, \Delta) \le \tw_k(c' \Delta)$ for $k \ge 4,$ where $c'$ is a constant depending on $k$ and $q$. Moreover, they constructed, for any positive integer $\Delta$, a $3$-uniform $n$-vertex hypergraph $H$ with maximum degree $\Delta$ and $r(H; 4) \ge \tw_3(c' \Delta) n$ for some absolute constant $c'.$ The hypergraph we constructed in the proof of Theorem~\ref{thm:lower-bound} has $n$ vertices, maximum degree $\Delta \le c_k n$ and $4$-color Ramsey number at least $\tw_k(c'_k n) n,$ so it can be viewed as a generalization of the aforementioned result to larger uniformities. These results show that in general, it is necessary to have $c(k, 4, \Delta) \ge \tw_k(c'_k \Delta).$ 
However, both our construction and that in \cite{conlon2009ramsey} have $\Delta = \Theta(n)$ and it would be interesting to find a construction which works for any $\Delta$ and sufficiently large $n$ similar to 
the abovementioned lower bound of Graham, R\"{o}dl and Ruci\'{n}ski which works for any $\Delta$ and $n \gg \Delta.$ 

Finally, we think it would be interesting to study the following generalization of Ramsey numbers. For a $k$-uniform hypergraph $H$ and positive integers $N$ and $q$ with $q \leq e(H)$, let $f(N,H,q)$ be the minimum number $r$ such that in every $r$-coloring of the edges of $K_N^{(k)}$ there is a copy of $H$ receiving fewer than $q$ colors. The case $q=2$ is just the inverse (as a function of the number of colors) of the Ramsey number of $H$. The case $H$ is a clique was introduced by Erd\H{o}s and Gyarfas and has been well-studied (see for example \cite{conlon2015recent}).

\end{document}